\documentclass[12pt,amscd]{amsart}
\footskip =0.7cm
\textwidth=15.3cm
\textheight=21cm
\oddsidemargin=0.5cm
\evensidemargin=0.5cm
\pagestyle{plain}
\usepackage{tikz}
\usepackage[all]{xy}
\usepackage{graphicx}
\usepackage{amsmath,amsxtra,amssymb,latexsym, amscd,amsthm}
\usepackage{indentfirst}
\usepackage[mathscr]{eucal}
\usepackage[pagebackref=true]{hyperref}


\newtheorem{thm}{Theorem}[section]
\newtheorem{cor}[thm]{Corollary}
\newtheorem{lem}[thm]{Lemma}

\theoremstyle{definition}
\newtheorem{defn}[thm]{Definition}
\newtheorem{exm}[thm]{Example}

\newtheorem{rem}[thm]{Remark}


\DeclareMathOperator{\ZZ}{\mathbb {Z}}
\DeclareMathOperator{\RR}{\mathbb {R}}

\DeclareMathOperator{\reg}{reg}

\def\mat{\operatorname{mat}}

\def\R {\mathcal R}

\def\a {\mathbf a}
\def\b {\mathbf b}

\def\e {\mathbf e}

\def\k {\mathrm k}
\def\p {\mathbf p}


\begin{document}

\title {Regularity of normal Rees algebras of edge ideals of graphs}

\author{Cao Huy Linh}
\address{University of Education, Hue University, 34 Le Loi, Hue City, Vietnam}
\email{caohuylinh@hueuni.edu.vn}

\author{Quang Hoa Tran}
\address{University of Education, Hue University, 34 Le Loi, Hue City, Vietnam}
\email{tranquanghoa@hueuni.edu.vn}
\author{Thanh Vu}
\address{Institute of Mathematics, VAST, 18 Hoang Quoc Viet, Hanoi, Vietnam}
\email{vuqthanh@gmail.com}

\subjclass{13D02, 05C75, 05E40}
\keywords{Rees algebra, regularity, edge ideal, Gallai-Edmonds decomposition, matching number}
\date{}

\dedicatory{Dedicated to the memory of Professor J\"urgen Herzog (1941-2024)}
\commby{}
\begin{abstract}
    We classify all graphs for which the Rees algebras of their edge ideals are normal and have regularity equal to their matching numbers.
\end{abstract}

\maketitle

\section{Introduction}\label{sect_intro}
In March 2023, we were fortunate to participate in the lectures on binomial ideals by Professor J\"urgen Herzog in the CoCoa school at Hue University. Motivated by his beautiful lectures and his recent joint work with Professor Takayuki Hibi on the regularity of Rees algebras of edge ideals we asked him about the regularity of Rees algebras of edge ideals of odd cycles. Professor Herzog said that it is a good research problem, so we started by trying to compute the regularity of Rees algebras of edge ideals of odd cycles. It turns out that a simple modification of the argument in \cite{HH} gives us the answer. Furthermore, it was known at that time that if $\R(G)$ is normal and $G$ is a K\"onig graph or has a perfect matching then $\reg (\R(G)) = \mat(G)$, where $\R(G)$ is the Rees algebra of the edge ideal of $G$, $\reg$ denotes the Castelnuovo-Mumford regularity, and $\mat(G)$ is the matching number of $G$. It is not hard to find a non K\"onig graph which does not have a perfect matching but still has $\reg (\R(G)) = \mat(G)$. But they are essentially gluing of K\"onig graphs to perfect matching graphs. We will prove a more precise statement below in this work which grew out of influential lectures and the work of Professor Herzog. We humbly dedicate this to our admiral teacher, Professor J\"urgen Herzog.

Let us now recall the notion of the Rees algebra of the edge ideal of a simple graph. Let $G$ be a simple graph on the vertex set $V(G)$ and edge set $E(G) \subseteq V(G) \times V(G)$. Assume that $V(G) = [n] = \{1,\ldots,n\}$. The Rees algebra of the edge ideal of $G$ over a field $\k$, denoted by $\R(G)$ is the subalgebra of $\k[x_1,\ldots, x_n,t]$ generated by $x_1,\ldots,x_n$ and $x_ix_jt$ where $\{i,j\}$ is an edge of $G$. In \cite{HH}, Herzog and Hibi proved that $\R(G)$ is normal if and only if $G$ satisfies the odd cycle condition \cite{OH} and has at most one non-bipartite connected component. Furthermore, in this case, $\mat(G) \le \reg (\R(G)) \le \mat(G) +1$. By \cite[Theorem 4.2]{CR}, \cite[Corollary 2.3]{HH}, and \cite[Corollary 3.2]{NN}, we have that $\reg(\R(G)) = \mat(G)$ when $G$ is a bipartite graph, a perfect matching graph, or a connected K\"onig graph. But it was not known whether $\reg(\R(G)) = \mat(G)$ or $\mat(G) + 1$ when $G$ is an odd cycle, which is the starting point of this work.

We will now introduce relevant graph concepts to state our main result. A subset $T \subseteq V(G)$ is an independent set of $G$ if $E(G) \cap T \times T = \emptyset$. For a subset $U$ of $V(G)$, we denote by $N_G(U)$ the set of neighbors of $U$ in $G$. 

\begin{defn}
    A graph $G$ is called a Tutte-Berge graph if there exists an independent set $T$ of $G$ such that 
    \begin{equation}\label{eq_TB}
        |T| = |N_G(T)| + |V(G)| - 2 \mat(G).
    \end{equation}
\end{defn}
The number $|V(G)|-2 \mat(G)$ is the number of uncovered vertices by a maximum matching, which was described by Tutte \cite{T} and Berge \cite{B}. When $G$ has a perfect matching, $T= \emptyset$ satisfies Eq. \eqref{eq_TB}. When $G$ is K\"onig, then a maximum independent set of $G$ satisfies Eq. \eqref{eq_TB}. Hence, perfect matching graphs and K\"onig graphs are two extremes of Tutte-Berge graphs. Tutte-Berge graphs are precisely the graphs for which $\reg (\R(G)) =\mat(G)$ when $\R(G)$ is normal.
\begin{thm}\label{thm_main}
    Let $G$ be a simple graph. Assume that $G$ has at least two edges and $\R(G)$ is normal. Then $\reg (\R(G)) = \mat(G)$ if and only if $G$ is a Tutte-Berge graph.
\end{thm}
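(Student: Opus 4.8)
The plan is to pass to the toric description of $\R(G)$ and read off its regularity from the canonical module. Since $\R(G)$ is normal it is a normal affine semigroup ring, hence Cohen--Macaulay by Hochster's theorem, and its defining semigroup $S \subseteq \ZZ^{n+1}$ is generated by $(\e_i,0)$ for $i \in [n]$ and $(\e_i+\e_j,1)$ for $\{i,j\}\in E(G)$. Writing a monomial of $\R(G)$ as a lattice point $(\a,b)$ (so $b$ records the $t$-degree), the presentation of $\R(G)$ as a standard graded quotient $\k[x_i,y_e]/J$ assigns to $(\a,b)$ the degree $\phi(\a,b)=|\a|-b$, where $|\a|=\sum a_i$; indeed a factorization into $p$ edge-generators and $q$ vertex-generators has $b=p$ and $|\a|=q+2p$, so its length is $p+q=|\a|-b$. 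I first record that the rational cone $C=\RR_{\ge 0}S$ is exactly $\{(\a,b): \a\ge \o,\ 0\le b\le \nu^*_\a(G)\}$, where $\nu^*_\a(G)$ is the maximum fractional matching of $G$ with vertex capacities $\a$; its relative interior consists of the points with $a_i>0$ for all $i$ and $0<b<\nu^*_\a(G)$. By normality $S=C\cap\ZZ^{n+1}$, so by the Danilov--Stanley description the canonical module $\omega_{\R(G)}$ is spanned by the interior lattice points, and graded local duality together with the Cohen--Macaulay property give
\[
\reg \R(G)=(n+1)-m^*,\qquad m^*:=\min\{\,|\a|-b : (\a,b)\in C\cap\ZZ^{n+1}\text{ interior}\,\}.
\]

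By \cite{HH} we already know $\mat(G)\le \reg\R(G)\le \mat(G)+1$, so $m^*\in\{n-\mat(G),\,n-\mat(G)+1\}$ and it suffices to decide when the smaller value is attained, i.e. when there is an interior lattice point of degree $n-\mat(G)$. Here the fractional matching number $\nu^*(G)=\nu^*_{\mathbf 1}(G)$ enters. On the one hand, taking $\a=\mathbf 1$ and $b=\mat(G)$ gives an interior point of degree $n-\mat(G)$ precisely when $\mat(G)<\nu^*(G)$. On the other hand, I would use the elementary Lipschitz estimate $\nu^*_{\a+\e_i}(G)\le \nu^*_\a(G)+1$ (immediate from fractional-vertex-cover duality, since the dual optimum may be taken with entries in $[0,1]$) to propagate the bound $\nu^*_\a(G)\le \nu^*(G)+(|\a|-n)$ from $\a=\mathbf 1$ to all $\a\ge\mathbf 1$; combined with $b<\nu^*_\a(G)$ this shows every interior point satisfies $|\a|-b>n-\nu^*(G)$. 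Consequently $m^*=n-\mat(G)$ exactly when $\nu^*(G)>\mat(G)$, that is
\[
\reg\R(G)=\mat(G)+1 \iff \nu^*(G)>\mat(G).
\]

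It then remains to prove the purely combinatorial equivalence $\nu^*(G)=\mat(G)\iff G \text{ is Tutte--Berge}$ (recall $\nu^*(G)\ge \mat(G)$ always). Setting $d(G)=\max\{|T|-|N_G(T)|: T\subseteq V(G)\text{ independent}\}$, linear-programming duality for fractional matchings gives $\nu^*(G)=\tfrac12(|V(G)|-d(G))$, while the Tutte--Berge formula gives $\mat(G)=\tfrac12(|V(G)|-\operatorname{def}(G))$ with $\operatorname{def}(G)=|V(G)|-2\mat(G)$. For any independent $T$, deleting $N_G(T)$ isolates $T$ and produces at least $|T|$ odd components, whence $\operatorname{def}(G)\ge |T|-|N_G(T)|$ and so $d(G)\le \operatorname{def}(G)$. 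Thus $\nu^*(G)=\mat(G)$ iff $d(G)=\operatorname{def}(G)$, which by the very definition of Eq.~\eqref{eq_TB} is exactly the assertion that $G$ is a Tutte--Berge graph. Combining this with the previous paragraph yields $\reg\R(G)=\mat(G)\iff G$ is Tutte--Berge, as claimed; the hypothesis that $\R(G)$ is normal (equivalently, the odd cycle condition with at most one non-bipartite component) guarantees through the Gallai--Edmonds decomposition that the gap $\operatorname{def}(G)-d(G)$ never exceeds $1$, which is consistent with the two regularity values of \cite{HH}.

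The main obstacle I anticipate is the first paragraph: verifying that the cone is precisely the capacitated fractional-matching region and pinning down its relative interior, together with the clean identification of the standard grading with $\phi(\a,b)=|\a|-b$, so that the Danilov--Stanley/local-duality formula for $\reg \R(G)$ is legitimate. A secondary technical point is the degenerate case $\mat(G)=1$ (for instance stars, where $\nu^*(G)=1$ and the all-ones vector no longer lies in the interior): there the minimizing lattice point must be taken with one capacity raised to $2$, and one checks directly that the value of $m^*$, and hence the conclusion $\reg\R(G)=\mat(G)$, is unaffected. The remaining steps are either standard matching theory or the short Lipschitz/monotonicity estimates above.
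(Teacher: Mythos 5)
Your proposal is correct and reaches the theorem by a genuinely different route from the paper. Both arguments share the same homological reduction: normality plus Hochster and Danilov--Stanley give $\reg \R(G) = (n+1) - m^*$, which is exactly Lemma \ref{lem_reg} of the paper (there phrased via the edge polytope $P_{G^*}$ of the cone graph, with $m^*=q_0$). The divergence is in how the minimal interior degree is controlled. The paper invokes the Ohsugi--Hibi facet description of $P_{G^*}$ (Theorem \ref{thm_OH}), normalizes interior lattice points to $\a=(1,\dots,1)$ via Lemma \ref{lem_red_1}, and then needs the Gallai--Edmonds Structure Theorem to manufacture a \emph{fundamental} independent set achieving equality in \eqref{eq_TB} (Lemma \ref{lem_Tutte_Berge_witness}). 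You instead describe the Rees cone directly as the capacitated fractional-matching region, so everything reduces to comparing $\mat(G)$ with the fractional matching number $\nu^*(G)$, and the combinatorial core becomes the classical identity $\nu^*(G)=\tfrac12\bigl(|V(G)|-\max_T(|T|-|N_G(T)|)\bigr)$ together with Lemma \ref{lem_bound_T}; this bypasses both Ohsugi--Hibi and Gallai--Edmonds and is more self-contained. Two points to make explicit: (i) that identity is not bare LP duality but also uses the half-integrality of optimal fractional vertex covers (entries in $\{0,\tfrac12,1\}$), which is likewise what justifies your Lipschitz estimate and what makes the constraint $b\le\nu^*_{\a}(G)$ a finite intersection of half-spaces, so that the relative interior is as you describe; (ii) the hypothesis that $G$ has at least two edges enters only through the Herzog--Hibi bounds $\mat(G)\le\reg\R(G)\le\mat(G)+1$ that you quote (for a single edge $\R(G)$ is a polynomial ring and the lower bound fails). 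Your worry about the degenerate case $\mat(G)=1$ is harmless for the same reason: you never need $m^*$ exactly, only a bound on each side. Incidentally, your fractional vertex covers of the form ($0$ on $T$, $1$ on $N_G(T)$, $\tfrac12$ elsewhere) are precisely the Ohsugi--Hibi facets $H^{-}_{G^*,T}$ after the substitution $x_{n+1}=|\a|-2b$, so the two proofs are dual descriptions of the same polyhedral geometry.
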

The main ingredients for the proof of Theorem \ref{thm_main} are the Gallai-Edmonds Structure Theorem and the description of the edge polytope of a graph of Ohsugi and Hibi \cite{OH}. To state a characterization of Tutte-Berge graphs, we recall the Gallai-Edmonds decomposition. Let $G$ be a simple graph. Denote by $D(G)$ the set of all vertices in $G$ which are not covered by at least one maximum matching of $G$. Let $A(G)$ be the set of vertices in $V(G) \backslash D(G)$ adjacent to at least one vertex in $D(G)$ and $C(G) = (V(G) \backslash D(G) ) \backslash  A(G)$. The decomposition $V(G) = D(G) \cup A(G) \cup C(G)$ is called the Gallai-Edmonds decomposition.

\begin{thm}\label{thm_characterization} Let $G$ be a simple graph. Then $G$ is Tutte-Berge if and only if $D(G)$ consists of isolated vertices only.
\end{thm}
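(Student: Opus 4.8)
The plan is to prove Theorem~\ref{thm_characterization} by directly relating the Tutte-Berge defining equation~\eqref{eq_TB} to the structure of the Gallai-Edmonds decomposition. Let me sketch the approach.

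First I would recall the quantitative content of the Gallai-Edmonds Structure Theorem. Write $V(G) = D(G) \cup A(G) \cup C(G)$. Let $c(D(G))$ denote the number of connected components of the induced subgraph $G[D(G)]$. The key facts are: every component of $G[D(G)]$ is factor-critical (removing any one vertex leaves a perfect matching), the induced subgraph $G[C(G)]$ has a perfect matching, and a maximum matching of $G$ matches $A(G)$ injectively into distinct components of $G[D(G)]$, matches $C(G)$ perfectly within itself, and near-perfectly matches each component of $G[D(G)]$. Consequently the Berge deficiency formula gives the number of uncovered vertices as
\begin{equation}\label{eq_deficiency}
    |V(G)| - 2\mat(G) = c(D(G)) - |A(G)|.
\end{equation}
I would take this deficiency formula as the arithmetic backbone of the argument, so that the right-hand side of~\eqref{eq_TB} becomes $|N_G(T)| + c(D(G)) - |A(G)|$.

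Next I would establish the easy implication: if $D(G)$ consists of isolated vertices only, then $G$ is Tutte-Berge. In this case each component of $G[D(G)]$ is a single vertex, so $c(D(G)) = |D(G)|$ and $D(G)$ itself is an independent set. Moreover $N_G(D(G)) \subseteq A(G)$, and since every vertex of $A(G)$ is adjacent to $D(G)$ by definition, in fact $N_G(D(G)) = A(G)$. Taking $T = D(G)$, I would verify that~\eqref{eq_TB} reduces, via~\eqref{eq_deficiency}, to the identity $|D(G)| = |A(G)| + (|D(G)| - |A(G)|)$, which holds; hence this $T$ is an independent set witnessing the Tutte-Berge condition.

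The harder and more interesting direction is the converse, which I expect to be the main obstacle: assuming $G$ is Tutte-Berge, I must show every component of $G[D(G)]$ is a single vertex. Here I would argue that the Tutte-Berge condition forces the witnessing independent set $T$ to be, up to the relevant counting, exactly $D(G)$, and that any nontrivial (hence odd, factor-critical, of order $\geq 3$) component of $G[D(G)]$ would produce a strict inequality. The strategy is to show that for \emph{any} independent set $T$ one always has $|T| \le |N_G(T)| + |V(G)| - 2\mat(G)$ (this is the standard one-sided Tutte-Berge inequality, provable by counting how a maximum matching meets $T$ and $N_G(T)$), with equality forcing rigidity. I would then analyze the equality case: an independent set $T$ can meet a factor-critical component $K$ of $G[D(G)]$ in at most $(|V(K)|-1)/2$ vertices while its neighbors inside $K$ number at least that many, and a component of size $\geq 3$ contributes a genuine loss to the balance, contradicting equality. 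The delicate point is bookkeeping the contributions of $T \cap D(G)$, $T \cap A(G)$ (which must be empty or tightly constrained since $A(G)$ vertices neighbor many components), and $T \cap C(G)$ simultaneously, and showing that the only way to saturate the bound in~\eqref{eq_TB} is for every $D$-component to be a singleton. I would organize this as a lemma isolating the general inequality and its equality characterization, then deduce the theorem.
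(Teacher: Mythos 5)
Your forward direction (isolated $D(G)$ implies Tutte--Berge) is correct and is essentially the paper's argument: take $T=D(G)$, observe $N_G(D(G))=A(G)$, and apply the deficiency formula $|V(G)|-2\mat(G)=c(D(G))-|A(G)|$ from Theorem \ref{thm_GE}. The one-sided inequality you isolate is also the right key lemma (it is Lemma \ref{lem_bound_T} in the paper).

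The gap is in the converse, precisely at the sentence ``a component of size $\ge 3$ contributes a genuine loss to the balance, contradicting equality.'' What factor-criticality actually gives (Lemma \ref{lem_critical}) is only $|T\cap V(K)|\le |N_K(T\cap V(K))|$, i.e.\ a \emph{non-positive} net contribution of a nontrivial component $K$ to $|T|-|N_G(T)|$; meanwhile $K$ still contributes $+1$ to the slack $c(D(G))-|A(G)|$ whether or not $T$ meets it. So there is no visible ``loss'': to reach a contradiction you must show that the remaining pieces of the decomposition cannot absorb that extra $+1$, i.e.\ that the global inequality splits additively over the Gallai--Edmonds pieces with no favorable cross terms coming from $T\cap A(G)$ and $T\cap C(G)$. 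That is exactly the bookkeeping you defer, and it is the entire content of the converse; as written the argument does not close. The paper avoids the global bookkeeping by localizing: if $K$ is a nontrivial component of the subgraph induced by $D(G)$, some maximum matching misses a vertex of $K$, so $\mat(G)=\mat(K)+\mat(G\setminus V(K))$ (Lemma \ref{lem_mat_addition}); Lemma \ref{lem_bound_T} then forces the equality to be inherited by $K$ alone (Lemma \ref{lem_inheritance_of_TB}), contradicting Lemma \ref{lem_critical}. Alternatively, your own counting proof of the inequality closes the gap more directly: $M$-matched vertices of $T$ inject into $N_G(T)$ and $M$-unmatched vertices of $T$ number at most $|V(G)|-2\mat(G)$, so equality forces every vertex missed by \emph{every} maximum matching to lie in $T$; hence $D(G)\subseteq T$, and independence of $T$ makes $D(G)$ edgeless. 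Either route should be made explicit before the proof can be accepted.
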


In the next section, we establish properties of Tutte-Berge graphs and prove Theorem \ref{thm_characterization}. In Section \ref{sec_reg}, we prove Theorem \ref{thm_main}.

\section{Tutte-Berge graphs}\label{sec_Tutte_Berge}
In this section, we classify all Tutte-Berge graphs. We first introduce relevant concepts. We refer to the beautiful exposition \cite{LP} for unexplained terminology and further information.

\begin{defn} Let $G$ be a simple graph. A matching in $G$ is a set of edges, no two of which share an endpoint. The matching number of $G$, denoted by $\mat(G)$, is the size of a maximum matching of $G$. A perfect matching is a matching that covers every vertex of the graph. A graph that has a perfect matching is also called a perfect matching graph.
\end{defn}
For a subset $U \subset V(G)$, we denote by $G\backslash U$ the induced subgraph of $G $ on $V(G) \setminus U.$ When $U=\{u\}$, we use $G\backslash u$ instead of $G\backslash \{u\}.$ We denote by $\alpha(G)$ the maximum size of an independent set of $G$.
\begin{defn} Let $G$ be a simple graph.
\begin{enumerate}
    \item $G$ is factor-critial if for every vertex $v$ of $G$, $G\backslash v$ has a perfect matching.
    \item $G$ is K\"onig if $\alpha(G) + \mat(G) = |V(G)|$.
\end{enumerate}
\end{defn}

Gallai \cite{G1, G2} and Edmonds \cite{E} independently proved the following structure theorem.

\begin{thm}[Gallai-Edmonds Structure Theorem]\label{thm_GE}Let $G$ be a simple graph, and $D(G), A(G)$ and $C(G)$ be defined as above. Then
\begin{enumerate}
    \item the components of the subgraph induced by $D(G)$ are factor-critical,
    \item the subgraph induced by $C(G)$ has a perfect matching,
    \item if $M$ is any maximum matching of $G$, it contains a near-perfect matching of each component of $D(G)$, a perfect matching of each component of $C(G)$ and matches all vertices of $A(G)$ with vertices in distinct components of $D(G)$,
    \item $2\mat(G) = |V(G)| - c(D(G)) + A(G)$, where $c(D(G))$ denotes the number of components of the graph spanned by $D(G)$.
\end{enumerate}
\end{thm}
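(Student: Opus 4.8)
The plan is to deduce all four assertions from a single combinatorial engine, \emph{Gallai's Lemma}: if a connected graph $H$ has the property that every vertex is missed by some maximum matching (equivalently $\mat(H\backslash v)=\mat(H)$ for all $v$, so that $D(H)=V(H)$), then $H$ is factor-critical. I would establish this lemma first, since the rest of the theorem is bookkeeping built on top of it. The proof is by contradiction using augmenting-path surgery on maximum matchings: if $H$ is connected with every vertex inessential but not factor-critical, then its deficiency $|V(H)|-2\mat(H)$ is at least two, so every maximum matching exposes at least two vertices. Taking maximum matchings $M,M'$ that expose different vertices and analyzing the alternating paths in the symmetric difference $M\triangle M'$, one reroutes such a path to produce a maximum matching exposing a pair of vertices that a minimal-distance choice forbids from being simultaneously exposed, a contradiction. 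Getting this extremal argument right, in particular handling the short alternating paths, is the crux of the whole theorem.

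Next I would prove (1). Let $K$ be a connected component of the subgraph induced by $D(G)$. By definition each vertex of $K$ lies in $D(G)$ and is therefore missed by some maximum matching of $G$; the task is to transfer this inessentiality to $K$ itself, that is, to show every vertex of $K$ is missed by a near-perfect matching \emph{of $K$}, after which Gallai's Lemma applied to the connected graph $K$ yields factor-criticality. For the transfer I would first record that every edge leaving $K$ lands in $A(G)$, since $C(G)$ is by construction non-adjacent to $D(G)$, and then argue that a maximum matching of $G$ missing $v\in K$ must restrict to a near-perfect matching of $K$ missing only $v$. This last point is exactly the content that couples (1) to the global matching structure, so rather than isolating it I would prove it together with (3).

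For (2) and (3) the device is a defect form of Hall's theorem applied to the bipartite incidence between $A(G)$ and the components of $D(G)$, where $a\in A(G)$ is joined to a component of $D(G)$ when it has a neighbor there. By (1) each such component is factor-critical, hence of odd order, so deleting $A(G)$ leaves at least $c(D(G))$ odd components, and the Berge--Tutte deficiency bound forces $\mat(G)\le\tfrac12\bigl(|V(G)|-c(D(G))+|A(G)|\bigr)$. To obtain equality and the structural statement I would build a maximum matching meeting this bound: a perfect matching of the subgraph induced by $C(G)$, which exists because each component of $C(G)$ has a perfect matching and which gives (2); a system of distinct representatives matching each $a\in A(G)$ into a distinct component of $D(G)$, whose Hall condition follows from factor-criticality of the components together with the maximality defining $A(G)$; and a near-perfect matching of each $D(G)$-component compatible with the chosen representatives. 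Conversely, an optimality argument shows every maximum matching $M$ has exactly this shape: $M$ cannot leave two vertices of one odd $D(G)$-component uncovered without violating maximality, it must saturate $A(G)$ into distinct components, and it must perfectly match $C(G)$, which is precisely (3).

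Finally (4) is immediate bookkeeping from (3). Among the $c(D(G))$ odd components of $D(G)$, exactly $|A(G)|$ have their otherwise-unmatched vertex absorbed by an edge to $A(G)$, while the remaining $c(D(G))-|A(G)|$ each leave one vertex uncovered; since $C(G)$ and $A(G)$ are fully covered, a maximum matching misses exactly $c(D(G))-|A(G)|$ vertices, so $|V(G)|-2\mat(G)=c(D(G))-|A(G)|$, which rearranges to the stated identity. The main obstacle throughout is the intertwining of (1) and (3): establishing that ``inessential in $G$'' descends to ``inessential in each component of $D(G)$'' genuinely needs the global picture of how maximum matchings interact with $A(G)$ and the odd components, so I would develop Gallai's Lemma and the Hall/deficiency argument in tandem and feed them into one extremal argument rather than proving the four parts in strict sequence.
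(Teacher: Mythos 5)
First, a point of comparison: the paper does not prove this theorem at all. It is quoted as the classical Gallai--Edmonds Structure Theorem, with citations to Gallai \cite{G1,G2} and Edmonds \cite{E}, so there is no in-paper argument to measure your attempt against; your sketch has to stand on its own as a reconstruction of the classical proof. Its architecture --- Gallai's Lemma as the engine, then deficiency counting and a Hall-type system-of-distinct-representatives argument --- is indeed the standard route (essentially the proof in Lov\'asz--Plummer \cite{LP}).

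However, there is a genuine gap, and it sits exactly at the point you flag as the crux and then defer. To apply Gallai's Lemma to a component $K$ of $D(G)$ you need to transfer inessentiality: if $v \in K$ is missed by some maximum matching of $G$, then $v$ is missed by some maximum matching of $K$, i.e., a maximum matching of $G$ missing $v$ restricts to a near-perfect matching of $K$. You correctly note that this restriction property is part of (3), while the counting argument you propose for (3) needs (1) (oddness of the $D$-components). Your resolution is to prove (1) and (3) ``in tandem'' via ``one extremal argument,'' but no such argument is ever specified, and the circle does not break by itself. The standard tool that breaks it is the \emph{Stability Lemma}: for $a \in A(G)$ one has $D(G\backslash a) = D(G)$, $A(G \backslash a) = A(G)\setminus\{a\}$, $C(G\backslash a) = C(G)$, and $\mat(G\backslash a) = \mat(G) - 1$. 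With it one strips $A(G)$ vertex by vertex; in $G \backslash A(G)$ the set $D(G)$ becomes a union of whole components, each with every vertex inessential, so Gallai's Lemma gives (1) componentwise, the components inside $C(G)$ are perfectly matched by any maximum matching (giving (2)), and $\mat(G) = \mat(G\backslash A(G)) + |A(G)|$ gives (4); then the counting you describe forces (3) for every maximum matching. Your proposal contains no substitute for this lemma, whose proof is itself a nontrivial alternating-path argument, so as written the plan is circular.

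Two subsidiary instances of the same problem: (i) your construction of a matching attaining the deficiency bound uses ``a perfect matching of the subgraph induced by $C(G)$, which exists because each component of $C(G)$ has a perfect matching'' --- but that existence \emph{is} assertion (2), so it cannot be an ingredient of its own proof; (ii) you assert that Hall's condition for matching $A(G)$ into distinct components of $D(G)$ ``follows from factor-criticality of the components together with the maximality defining $A(G)$,'' but no derivation is given, and I do not believe it follows from those two facts alone. In the standard proof the SDR is not verified through Hall's criterion at all: once the equality $|V(G)| - 2\mat(G) = c(D(G)) - |A(G)|$ is established via the Stability Lemma, the counting argument forces every maximum matching to match $A(G)$ into distinct components, so the SDR is a consequence of the structure theorem, not an input to it.
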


We now have some preparation lemmas.

\begin{lem}\label{lem_critical} Let $G$ be a factor-critical graph. Assume that $|V(G)|>1$. Then for any independent set $T$ of $G$ we have $|T| \le |N_G(T)|.$ In particular, if $|V(G)| > 1$ then $G$ is not Tutte-Berge.
\end{lem}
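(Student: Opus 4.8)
The plan is to establish the inequality $|T| \le |N_G(T)|$ for every independent set $T$ directly, and then to read off the non-Tutte-Berge conclusion from the numerical identity $|V(G)| - 2\mat(G) = 1$, which holds for any factor-critical graph on more than one vertex.

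First I would record two elementary facts about a factor-critical graph $G$ with $|V(G)| > 1$. (i) No vertex is isolated: if $v$ were isolated, then for any $u \neq v$ the perfect matching of $G \setminus u$ would have to cover $v$, which is impossible; hence $N_G(v) \neq \emptyset$ for every $v$. (ii) Since $G \setminus v$ has a perfect matching, $|V(G)|$ is odd, and that matching, viewed inside $G$, shows $\mat(G) \ge (|V(G)|-1)/2$; as $|V(G)|$ is odd there is no perfect matching, so $\mat(G) = (|V(G)|-1)/2$ and therefore $|V(G)| - 2\mat(G) = 1$.

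For the inequality, the single idea that matters is the choice of which vertex to delete. If $T = \emptyset$ the claim is trivial, so assume $T \neq \emptyset$ and fix $v \in T$ together with a neighbor $z$ of $v$, which exists by (i); since $T$ is independent, $z \in N_G(T) \setminus T$. Instead of deleting an element of $T$, I delete $z$: by factor-criticality $G \setminus z$ has a perfect matching $M$, and because $z \notin T$ this matching covers every vertex of $T$, sending each $t \in T$ to a distinct partner $\phi(t)$. Independence of $T$ forces $\phi(t) \in N_G(T)$ and $\phi(t) \notin T$, while $\phi(t) \neq z$ because $z$ is left uncovered by $M$; thus $\phi$ injects $T$ into $N_G(T) \setminus \{z\}$, giving $|T| \le |N_G(T)| - 1 \le |N_G(T)|$. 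The reason for removing a neighbor of $T$ rather than a vertex of $T$ is precisely to keep all of $T$ inside the matched region: deleting $v \in T$ would only match $T \setminus \{v\}$ and yield the weaker bound $|T| - 1 \le |N_G(T)|$, which is not enough to rule out the Tutte-Berge equality.

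Finally, for the ``in particular'' statement I would combine the two ingredients: for any independent set $T$,
\[
|T| \le |N_G(T)| < |N_G(T)| + 1 = |N_G(T)| + |V(G)| - 2\mat(G),
\]
so equality \eqref{eq_TB} is impossible and $G$ is not Tutte-Berge. I expect the only genuinely non-routine step to be recognizing that one should delete a neighbor of $T$; once that choice is made, the remainder is bookkeeping with the definitions of factor-critical and of the matching number.
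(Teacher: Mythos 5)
Your proof is correct and uses essentially the same mechanism as the paper: delete a single vertex lying outside $T$, and use the resulting perfect matching to inject $T$ into $N_G(T)$, then combine with $|V(G)|-2\mat(G)=1$. The paper simply deletes an arbitrary vertex $v\notin T$ (which exists since $T\neq V(G)$), so your extra care in choosing a neighbor $z$ of $T$ is unnecessary — any vertex outside $T$ already keeps all of $T$ in the matched region — but it is not wrong.
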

\begin{proof}
    By definition, we have that $|V(G)|$ is odd. Let $T$ be an independent set of $G$. Since $|V(G)|>1$, there exists a vertex $v$ of $G$ such that $v \notin T$. Since $G\backslash v$ has a perfect matching and contains $T$, we deduce that
    $$|T| \le |N_{G\backslash v}(T)| \le | N_G(T)|.$$
    The conclusion follows.
\end{proof}

\begin{lem}\label{lem_mat_addition} Let $G$ be a simple graph and $U \subseteq V(G)$ a subset of vertices of $G$. We denote by $G_1$ and $G_2$ the induced subgraphs of $G$ on $U$ and $V(G)\backslash U$, respectively. Assume that there exists a maximum matching $M$ of $G$ and a partition $M = M_1 \cup M_2$ such that $M_1$ is contained in $G_1$ and $M_2$ is contained in $G_2$. Then $\mat(G) = \mat(G_1) + \mat(G_2)$.    
\end{lem}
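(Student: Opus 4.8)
The plan is to prove the equality by establishing the two opposite inequalities, only one of which actually uses the hypothesis. For the lower bound $\mat(G) \ge \mat(G_1) + \mat(G_2)$, I would simply choose a maximum matching $N_1$ of $G_1$ and a maximum matching $N_2$ of $G_2$. Since $G_1$ and $G_2$ are the induced subgraphs on the disjoint vertex sets $U$ and $V(G) \setminus U$, the edges of $N_1$ and the edges of $N_2$ involve pairwise disjoint vertices. Hence $N_1 \cup N_2$ is a matching of $G$ of size $|N_1| + |N_2| = \mat(G_1) + \mat(G_2)$, which forces $\mat(G) \ge \mat(G_1) + \mat(G_2)$. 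Note that this direction holds for any $U$, with no assumption on $M$.

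For the upper bound $\mat(G) \le \mat(G_1) + \mat(G_2)$, this is where the hypothesis enters. I would use the given maximum matching $M = M_1 \cup M_2$, where $M_1$ is contained in $G_1$ and $M_2$ is contained in $G_2$. As a subset of the matching $M$ all of whose edges lie entirely within $U$, the set $M_1$ is itself a matching of $G_1$, so $|M_1| \le \mat(G_1)$; symmetrically $|M_2| \le \mat(G_2)$. Because $M_1$ and $M_2$ partition $M$, we obtain
$$
\mat(G) = |M| = |M_1| + |M_2| \le \mat(G_1) + \mat(G_2).
$$
Combining the two inequalities gives the claimed equality.

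There is essentially no difficult step here; the argument is a short counting. The only point worth flagging is conceptual rather than technical: the decomposition $M = M_1 \cup M_2$ with no crossing edges is genuinely a hypothesis, since a maximum matching of $G$ may in general use edges joining $U$ to $V(G) \setminus U$, in which case no such partition exists and the conclusion can fail. Thus the content of the lemma is precisely that the existence of one maximum matching that does not cross the partition is enough to guarantee additivity of the matching number.
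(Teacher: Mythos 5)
Your proof is correct and follows essentially the same two-inequality argument as the paper: the lower bound from combining maximum matchings of $G_1$ and $G_2$, and the upper bound from splitting the given maximum matching $M$ as $M_1 \cup M_2$. The paper's version is just a terser statement of exactly this reasoning.
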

\begin{proof}
    Since a matching of $G_1$ and a matching of $G_2$ give a matching of $G$, we deduce that $\mat(G) \ge \mat(G_1) + \mat(G_2)$. The existence of a maximum matching $M$ in the hypothesis implies that $\mat(G) \le \mat(G_1) + \mat(G_2)$. The conclusion follows.
\end{proof}

\begin{lem}\label{lem_bound_T} Let $G$ be a simple graph and $T \subseteq V(G)$ an independent set of $G$. Then 
$$|T| \le |N_G(T)| + |V(G)| - 2 \mat(G).$$    
\end{lem}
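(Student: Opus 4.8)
The plan is to prove the inequality
$$|T| \le |N_G(T)| + |V(G)| - 2\mat(G)$$
by exhibiting an explicit matching whose size is large enough to force the bound. The key observation is that the right-hand side rearranges to $2\mat(G) \le |V(G)| - |T| + |N_G(T)|$, so it suffices to produce a matching saturating ``most'' of the vertices outside $T$ together with $N_G(T)$. First I would isolate the relevant vertex sets: let $T$ be the given independent set, $N = N_G(T)$ its neighborhood, and $W = V(G) \setminus (T \cup N)$ the remaining vertices. Since $T$ is independent, every edge incident to a vertex of $T$ must have its other endpoint in $N$; in particular there are no edges between $T$ and $W$, and no edges within $T$.

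The heart of the argument is a counting bound on how many edges of a maximum matching $M$ can be ``charged'' to vertices of $T$. Every edge of $M$ that covers a vertex of $T$ uses up one vertex of $N$ as its other endpoint, because $T$ is independent. Hence the number of edges of $M$ meeting $T$ is at most $|N|$, since distinct matching edges use distinct vertices of $N$. The vertices of $T$ not covered by $M$, of which there are at least $|T| - |N|$, are left uncovered. I would then count uncovered vertices: $M$ covers exactly $2\mat(G)$ vertices, so the number of uncovered vertices is $|V(G)| - 2\mat(G)$. Since at least $|T| - |N|$ vertices of $T$ are uncovered, we get $|T| - |N| \le |V(G)| - 2\mat(G)$, which is exactly the claimed inequality after rearranging.

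Concretely, the cleanest way to write this is to count matching edges by their endpoints in $T$. Let $M$ be a maximum matching, so $|M| = \mat(G)$. Partition the vertices covered by $M$ and note each edge of $M$ has at most one endpoint in $T$ (as $T$ is independent), and that endpoint forces the companion endpoint into $N$. Thus the map sending a matching edge meeting $T$ to its endpoint in $N$ is injective, giving that at most $|N|$ edges of $M$ meet $T$, hence at most $|N|$ vertices of $T$ are covered. Therefore the uncovered vertices of $G$ number at least $|T| - |N|$, and comparing with the exact count $|V(G)| - 2\mat(G)$ of uncovered vertices yields the bound.

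I do not expect a serious obstacle here, as the statement is essentially a weak form of the Tutte–Berge formula and follows from a direct double count. The only point requiring a little care is the injectivity claim: one must verify that an edge of $M$ cannot have \emph{both} endpoints in $T$ (immediate from independence of $T$) and that two distinct matching edges cannot share the same endpoint in $N$ (immediate from $M$ being a matching). Once these two facts are in place, the inequality drops out by arithmetic, so the main work is simply setting up the count cleanly rather than overcoming any genuine difficulty.
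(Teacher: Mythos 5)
Your proof is correct, and it takes a genuinely different and more elementary route than the paper. You fix a maximum matching $M$ and double count: since $T$ is independent, each edge of $M$ meeting $T$ has exactly one endpoint in $T$ and its other endpoint in $N_G(T)$, and disjointness of matching edges makes the assignment of such edges to their $N_G(T)$-endpoints injective; hence at most $|N_G(T)|$ vertices of $T$ are covered, so at least $|T|-|N_G(T)|$ vertices are uncovered, and comparing with the exact count $|V(G)|-2\mat(G)$ of uncovered vertices gives the inequality. This is essentially the easy direction of the Tutte--Berge formula applied to $U=N_G(T)$, and both of the small verifications you flag (no edge of $M$ has both endpoints in $T$; distinct matching edges have distinct endpoints) are immediate. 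The paper instead argues by contradiction on a minimal counterexample: it invokes the Gallai--Edmonds Structure Theorem to split off a perfect-matching piece $G_2$ (a vertex of $A(G)$ together with a component of $D(G)$), uses additivity of the matching number, and reduces to a strictly smaller graph, with factor-critical and perfect-matching graphs as the base cases. Your argument is shorter and self-contained; the paper's heavier machinery is natural in context only because Gallai--Edmonds is already needed for the other results there, but for this lemma your direct count is the cleaner proof.
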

\begin{proof} Assume by contradiction that there exists a graph $G$ and an independent set $T$ of $G$ such that $|T| > |N_G(T)| + |V(G)| - 2 \mat(G)$. Let $G$ be such a graph of the smallest size. In particular, $G$ is connected. If $G$ has a perfect matching, then $|T| \le |N_G(T)|$ for any independent set $T$, so we must have $G$ does not have a perfect matching. If $A(G) = \emptyset$ then $G = D(G)$ is factor-critical, which is a contradiction to Lemma \ref{lem_critical}. Hence, $A(G) \neq \emptyset$. Let $v$ be any element of $A(G)$. Let $M$ be any maximum matching of $G$. By the Edmonds-Gallai Structure Theorem, $M$ contains an edge of the form $vw$ with $w$ in some component $D_2$ of $D(G)$. Let $G_2$ be the induced subgraph of $G$ on $V(D_2) \cup \{v\}$ and $G_1$ be the induced subgraph of $G$ on $V(G) \setminus V(G_2)$. By Theorem \ref{thm_GE} and Lemma \ref{lem_mat_addition}, we have that $G_2$ has a perfect matching and $\mat(G) = \mat(G_1) + \mat(G_2)$. Let $T_1 = T \cap V(G_1)$ and $T_2 = T \cap V(G_2)$. Then we have $N_G(T) \ge N_{G_1}(T_1) + N_{G_2}(T_2)$. Thus, we have 
$$|T_1| + |T_2| > |N_{G_1}(T_1) | + |N_{G_2}(T_2)| + |V(G_1)| - 2 \mat(G_1).$$
Since $G_2$ has a perfect matching, $|T_2| \le |N_{G_2}(T_2)|$. Therefore, we must have 
$$|T_1| > |N_{G_1}(T_1) | + |V(G_1)| - 2 \mat(G_1),$$
which is a contradiction, as $G_1$ is strictly smaller than $G$. The conclusion follows.
\end{proof}

\begin{lem}\label{lem_components} Let $G$ be a simple graph. Then $G$ is Tutte-Berge if and only if each connected component of $G$ is Tutte-Berge.   
\end{lem}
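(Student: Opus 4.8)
The plan is to exploit the fact that all of the quantities appearing in the defining equation \eqref{eq_TB} are additive over the connected components of $G$, and then to combine this with the per-component inequality of Lemma \ref{lem_bound_T}. Write the connected components of $G$ as $G_1,\dots,G_s$, so that $V(G)=\bigsqcup_{i=1}^s V(G_i)$. For an independent set $T$ of $G$ I would set $T_i = T\cap V(G_i)$; each $T_i$ is then an independent set of $G_i$. The first thing to record is that neighbors and matching numbers split cleanly across components: since no edge of $G$ joins two distinct components, $N_G(T)=\bigsqcup_{i=1}^s N_{G_i}(T_i)$, whence $|N_G(T)|=\sum_i |N_{G_i}(T_i)|$; and a maximum matching of $G$ is precisely a disjoint union of maximum matchings of the $G_i$, so $\mat(G)=\sum_i \mat(G_i)$ (this last identity also follows by iterating Lemma \ref{lem_mat_addition} with $U=V(G_1)$).

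It is convenient to introduce the \emph{defect} $\delta_H(S)=|N_H(S)|+|V(H)|-2\mat(H)-|S|$ for a graph $H$ and an independent set $S$ of $H$. With this notation Lemma \ref{lem_bound_T} reads $\delta_H(S)\ge 0$, and $H$ is Tutte-Berge exactly when $\delta_H(S)=0$ for some independent $S$. The additivity relations above give at once
$$\delta_G(T)=\sum_{i=1}^s \delta_{G_i}(T_i)$$
for every independent set $T$ of $G$.

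From this identity both directions follow quickly. For the backward implication, if each $G_i$ is Tutte-Berge with witness $T_i$, I would take $T=\bigcup_i T_i$; this is independent in $G$ because the components are pairwise non-adjacent, and $\delta_G(T)=\sum_i \delta_{G_i}(T_i)=0$, so $G$ is Tutte-Berge. For the forward implication, if $G$ is Tutte-Berge with witness $T$, then $\sum_i \delta_{G_i}(T_i)=\delta_G(T)=0$; since each summand is nonnegative by Lemma \ref{lem_bound_T} applied to $G_i$, every $\delta_{G_i}(T_i)$ must vanish, so each $G_i$ is Tutte-Berge.

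I expect the only point needing care — the main obstacle, though it is mild — to be the verification that $N_G(T)$ and $\mat(G)$ decompose as a disjoint union and a sum over the components. Both are immediate from the absence of edges between distinct components, but they are exactly the facts that convert the one-sided bound of Lemma \ref{lem_bound_T} into a term-by-term equality criterion, which is what makes the argument go through.
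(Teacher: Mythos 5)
Your proof is correct and is essentially the argument the paper intends: the paper's own proof is the one-line remark that the claim ``follows from the definition and Lemma \ref{lem_bound_T},'' and your write-up simply makes explicit the additivity of $|T|$, $|N_G(T)|$, $|V(G)|$, and $\mat(G)$ over components together with the term-by-term vanishing forced by the nonnegativity in Lemma \ref{lem_bound_T}. No gaps; this is a faithful expansion of the same approach.
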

\begin{proof}
    The conclusion follows from the definition and Lemma \ref{lem_bound_T}.
\end{proof}

The following properties of Tutte-Berge graphs make it a natural class containing K\"onig graphs and perfect matching graphs.
\begin{lem}\label{lem_inheritance_of_TB}  Let $G$ be a Tutte-Berge graph and $U \subseteq V(G)$ a subset of vertices of $G$. We denote by $G_1$ and $G_2$ the induced subgraph of $G$ on $U$ and $V(G)\backslash U$, respectively. Assume that there exists a maximum matching $M$ of $G$ and a partition $M = M_1 \cup M_2$ such that $M_1$ is contained in $G_1$ and $M_2$ is contained in $G_2$. Then $G_1$ and $G_2$ are Tutte-Berge.    
\end{lem}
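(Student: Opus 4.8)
The plan is to take the independent set that witnesses the Tutte-Berge property of $G$ and restrict it to the two pieces, then argue that the defining equality for $G$ is impossible unless the corresponding equalities hold for $G_1$ and $G_2$ separately. Concretely, let $T$ be an independent set of $G$ realizing the equality $|T| = |N_G(T)| + |V(G)| - 2\mat(G)$, and set $T_1 = T \cap U$ and $T_2 = T \cap (V(G)\setminus U)$. Since $U$ and $V(G)\setminus U$ partition $V(G)$, we have $|T| = |T_1| + |T_2|$; and because $T_1, T_2 \subseteq T$ while $G_1, G_2$ are induced subgraphs, both $T_1$ and $T_2$ are independent in $G_1$ and $G_2$ respectively.

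First I would record two structural facts. As $G_1$ and $G_2$ are induced on the disjoint vertex sets $U$ and $V(G)\setminus U$, the neighbor sets $N_{G_1}(T_1) \subseteq U$ and $N_{G_2}(T_2) \subseteq V(G)\setminus U$ are disjoint, and each is contained in $N_G(T)$; hence $|N_{G_1}(T_1)| + |N_{G_2}(T_2)| \le |N_G(T)|$. Next, the hypothesis that $M = M_1 \cup M_2$ respects the partition, together with Lemma \ref{lem_mat_addition}, gives $\mat(G) = \mat(G_1) + \mat(G_2)$, while trivially $|V(G)| = |V(G_1)| + |V(G_2)|$.

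Now I apply the general upper bound of Lemma \ref{lem_bound_T} to each piece, obtaining $|T_i| \le |N_{G_i}(T_i)| + |V(G_i)| - 2\mat(G_i)$ for $i = 1, 2$. Summing these two bounds and invoking the additivity and subadditivity relations above yields
$$|T| = |T_1| + |T_2| \le |N_{G_1}(T_1)| + |N_{G_2}(T_2)| + |V(G)| - 2\mat(G) \le |N_G(T)| + |V(G)| - 2\mat(G) = |T|,$$
so every inequality in the chain must be an equality. In particular, both instances of Lemma \ref{lem_bound_T} are attained: since $a \le A$, $b \le B$ together with $a + b = A + B$ force $a = A$ and $b = B$, we conclude $|T_i| = |N_{G_i}(T_i)| + |V(G_i)| - 2\mat(G_i)$ for $i = 1, 2$. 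Thus $T_1$ and $T_2$ witness that $G_1$ and $G_2$ are Tutte-Berge, as desired.

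The argument is essentially forced once the restriction is set up, so I do not anticipate a serious obstacle; the only point demanding care is the claim that $N_{G_1}(T_1)$ and $N_{G_2}(T_2)$ are disjoint subsets of $N_G(T)$. This uses in an essential way that $G_1$ and $G_2$ are \emph{induced} subgraphs, so that no edges within the pieces are lost and no cross-edges are erroneously counted, and it combines with the additivity of the matching number (which is exactly what the matching $M$ respecting the partition buys us via Lemma \ref{lem_mat_addition}) to make the squeeze tight.
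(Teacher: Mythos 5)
Your proof is correct and follows essentially the same route as the paper: restrict the witnessing independent set $T$ to $T_1 = T\cap U$ and $T_2 = T\cap(V(G)\setminus U)$, use Lemma \ref{lem_mat_addition} for $\mat(G)=\mat(G_1)+\mat(G_2)$ and the disjointness of $N_{G_1}(T_1)$ and $N_{G_2}(T_2)$ inside $N_G(T)$, and then squeeze against the upper bound of Lemma \ref{lem_bound_T} to force equality in each piece. The paper's proof is exactly this argument, written slightly more tersely.
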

\begin{proof} By Lemma \ref{lem_mat_addition}, $\mat(G) = \mat(G_1) + \mat(G_2)$. Let $T$ be an independent set of $G$ such that $|T| = |N_G(T)| + |V(G)| - 2 \mat(G)$. We denote by $T_1 = T \cap U$ and $T_2 = T \cap (V(G)\backslash U)$. We have that $|N_G(T)| \ge N_{G_1}(T_1) + N_{G_2}(T_2)$. Hence,
$$|T_1| + |T_2| \ge \left( |N_{G_1}(T_1)| + |V(G_1)| - 2\mat(G_1) \right ) + \left( |N_{G_2}(T_2)| + |V(G_2)| - 2\mat(G_2) \right ).$$
By Lemma \ref{lem_bound_T}, we deduce that $|T_1| = |N_{G_1}(T_1)| + |V(G_1)| - 2\mat(G_1)$ and $|T_2| = |N_{G_2}(T_2)| + |V(G_2)| - 2\mat(G_2)$. The conclusion follows.    
\end{proof}

\begin{lem}\label{lem_perfect} Assume that $G$ is a Tutte-Berge graph and $D(G)$ has no isolated vertices. Then $D(G) = \emptyset$ and $G$ has a perfect matching.    
\end{lem}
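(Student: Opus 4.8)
The plan is to argue by contradiction at the level of the Gallai--Edmonds decomposition, assuming $D(G)\neq\emptyset$ and producing a forbidden factor-critical Tutte-Berge subgraph. First note that the perfect matching half of the conclusion is automatic once we know $D(G)=\emptyset$: in that case $A(G)=\emptyset$ as well, since $A(G)$ consists of vertices adjacent to $D(G)$ and there is nothing for them to be adjacent to, so Theorem \ref{thm_GE}(4) gives $|V(G)|-2\mat(G)=c(D(G))-|A(G)|=0$, that is, $G$ has a perfect matching. Thus the entire task is to rule out $D(G)\neq\emptyset$.

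So suppose $D(G)\neq\emptyset$. By Theorem \ref{thm_GE}(1) each connected component of the subgraph induced on $D(G)$ is factor-critical and hence has odd order; the hypothesis that $D(G)$ has no isolated vertices then forces every such component to have order at least $3$. The strategy is to peel off one such large component as an induced subgraph $G_1$ using the inheritance property Lemma \ref{lem_inheritance_of_TB}, and to contradict Lemma \ref{lem_critical}, which says that a factor-critical graph on more than one vertex is never Tutte-Berge.

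To run this I need a component of $D(G)$ that lies entirely inside a fixed maximum matching. Since $D(G)\neq\emptyset$ there is a vertex left uncovered by a maximum matching, so $|V(G)|-2\mat(G)\ge 1$, which by Theorem \ref{thm_GE}(4) reads $c(D(G))>|A(G)|$. Fix a maximum matching $M$. By Theorem \ref{thm_GE}(3), $M$ matches the vertices of $A(G)$ into pairwise distinct components of $D(G)$, so at most $|A(G)|$ components of $D(G)$ are met by an $A(G)$-edge of $M$; since $c(D(G))>|A(G)|$, some component $D_1$ of $D(G)$ has no vertex matched by $M$ to $A(G)$. Put $U=V(D_1)$ and let $G_1$ and $G_2$ be the induced subgraphs of $G$ on $U$ and $V(G)\setminus U$; note that $G_1$ is exactly the factor-critical component $D_1$.

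The crux is to verify that $M$ splits across $U$ with no edge crossing between $V(D_1)$ and its complement, which is precisely the hypothesis of Lemma \ref{lem_inheritance_of_TB}. In $G$, every neighbor of a vertex of $D_1$ lies either in $D_1$, as $D_1$ is a connected component of the subgraph induced on $D(G)$, or in $A(G)$, by the definition of $A(G)$; since by construction no vertex of $D_1$ is matched by $M$ to $A(G)$, no edge of $M$ crosses between $V(D_1)$ and its complement. Hence the edges of $M$ meeting $D_1$ form a matching $M_1$ contained in $G_1$, and $M_2=M\setminus M_1$ is contained in $G_2$, giving $M=M_1\cup M_2$. Lemma \ref{lem_inheritance_of_TB} then shows $G_1=D_1$ is Tutte-Berge, contradicting Lemma \ref{lem_critical} since $D_1$ is factor-critical with $|V(D_1)|\ge 3>1$. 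Therefore $D(G)=\emptyset$, and the perfect matching follows as noted. The main obstacle is exactly this no-crossing-edge verification, which is where the existence of an $A(G)$-unmatched component, guaranteed by the deficiency count $c(D(G))>|A(G)|$, is essential.
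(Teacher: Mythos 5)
Your proof is correct, but it takes a genuinely different route from the paper's own argument for this lemma. The paper proceeds by induction on $|V(G)|$: when $A(G)\neq\emptyset$ it picks $u\in A(G)$ and an edge $uw$ of a maximum matching with $w$ in a component $D_2$ of $D(G)$, peels off the perfect-matching piece induced on $V(D_2)\cup\{u\}$, checks that the remaining Tutte-Berge graph $G_1$ still has no isolated vertices in $D(G_1)$, and applies the induction hypothesis to conclude that $G_1$, hence $G$, has a perfect matching. You instead argue directly: from Theorem \ref{thm_GE}(4) the deficiency $|V(G)|-2\mat(G)=c(D(G))-|A(G)|$ is positive once $D(G)\neq\emptyset$, so by part (3) some component $D_1$ of $D(G)$ receives no matching edge from $A(G)$; your verification that no edge of $M$ then crosses out of $V(D_1)$ is correct (neighbors of $D_1$ lie in $D_1$ or $A(G)$), so Lemma \ref{lem_inheritance_of_TB} makes $D_1$ a factor-critical Tutte-Berge graph on more than one vertex, contradicting Lemma \ref{lem_critical}. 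Your counting argument buys a shorter, induction-free proof that sidesteps the somewhat delicate step of showing the ``no isolated vertices'' hypothesis is inherited by $G_1$; in fact it is essentially the argument the paper deploys immediately afterwards for the stronger Lemma \ref{lem_isolatedpoints}, which supersedes the present statement. The paper's inductive version, by contrast, produces the perfect matching constructively by assembling perfect matchings of the peeled-off pieces, whereas you obtain it at the end from the identity $2\mat(G)=|V(G)|$ once $D(G)=A(G)=\emptyset$; both are legitimate.
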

\begin{proof} By Lemma \ref{lem_components}, we may assume that $G$ is connected. We prove by induction on $|V(G)|$. If $|A(G)| = 0$ then $D(G) = \emptyset$ or $G = D(G)$. By Lemma \ref{lem_critical}, we must have $D(G) = \emptyset$ and $G$ has a perfect matching. Assume by contradiction that $|A(G)| \ge 1$. Let $u$ be an element of $A(G)$. As in the proof of Lemma \ref{lem_bound_T}, let $M$ be any maximum matching of $G$. Then $M$ contains an edge that connects $u$ to a connected component $D_2$ of $D(G)$. Let $G_1 = G \backslash (D_2 \cup \{u\})$ and $G_2$ be the induced subgraph of $G$ on $D_2 \cup \{u\}$. By Theorem \ref{thm_GE} and Lemma \ref{lem_inheritance_of_TB}, $G_1$ is Tutte-Berge and $G_2$ has a perfect matching. We will prove that $D(G_1)$ has no isolated vertices. Let $w$ be any element of $D(G_1)$. Then there exists a maximum matching $M_1$ of $G_1$ such that $w \notin M_1$. $M_1$ and a maximum matching of $G_2$ form a maximum matching of $G$. Hence, $w \in D(G)$. In other words, $D(G_1) \subseteq D(G) \setminus D_2$ and $A(G_1) \subseteq A \setminus u$. Assume by contradiction that $w \in D(G_1)$ is an isolated vertex. Then, the connected component of $D(G)$ containing $w$ must have vertices in $A(G_1) \subseteq A(G) \backslash u$. In other words, $D(G) \cap A(G) \backslash u \neq \emptyset$. This is a contradiction. Thus, $D(G_1)$ has no isolated vertices. By induction, $G_1$ has a perfect matching. Hence, $G$ itself has a perfect matching. In other words, $D(G) = \emptyset$. The conclusion follows.   
\end{proof}

\begin{lem}\label{lem_isolatedpoints} Assume that $G$ is a Tutte-Berge graph. Then, $D(G)$ consists of isolated vertices only.    
\end{lem}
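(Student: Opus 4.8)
The plan is to argue by induction on $|V(G)|$. By Lemma \ref{lem_components} I may assume that $G$ is connected. If $A(G) = \emptyset$, then (since $G$ is connected and there are no edges between $D(G)$ and $C(G)$) either $G = D(G)$ is a single factor-critical component or $G = C(G)$ has a perfect matching: in the first case Lemma \ref{lem_critical} forces $|V(G)| = 1$, and in either case $D(G)$ consists of isolated vertices only. Hence I may assume $A(G) \neq \emptyset$.

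The clean half of the argument disposes of the components of $D(G)$ that a maximum matching leaves deficient. Fix a maximum matching $M$ of $G$. By Theorem \ref{thm_GE}(3), $M$ matches the vertices of $A(G)$ into distinct components of $D(G)$ and restricts to a near-perfect matching of each component of $D(G)$. If $D_0$ is a component of $D(G)$ into which no vertex of $A(G)$ is matched, then every edge of $M$ meeting $V(D_0)$ lies inside $D_0$, so $M$ splits as a matching of $G[V(D_0)]$ together with a matching of the complementary induced subgraph. Lemma \ref{lem_inheritance_of_TB} then shows that the factor-critical graph $G[V(D_0)]$ is Tutte-Berge, whence Lemma \ref{lem_critical} forces $|V(D_0)| = 1$. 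Thus every component of $D(G)$ that is left uncovered by $A(G)$ under some maximum matching is a single vertex.

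It remains to treat the components of $D(G)$ that are matched to $A(G)$, and this is where the main obstacle lies. The key reduction is to produce a maximum matching $M^{*}$ of $G$ that matches every vertex of $A(G)$ into a \emph{singleton} component of $D(G)$; granting this, every non-singleton component is left deficient by $M^{*}$ and hence is a single vertex by the previous paragraph, giving the conclusion. To build such an $M^{*}$ I would pass to the bipartite ``component graph'' whose parts are $A(G)$ and the set of components of $D(G)$, with edges recording adjacency in $G$; by Theorem \ref{thm_GE} a maximum matching of $G$ corresponds to a matching of this bipartite graph saturating $A(G)$, and I would show that the Tutte-Berge equality \eqref{eq_TB}, combined with Lemma \ref{lem_bound_T}, forces Hall's condition to hold relative to the singleton components, so that $A(G)$ can be saturated using singletons alone. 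Equivalently, one may peel off a vertex $u \in A(G)$ together with its matched component $D_2$, pass to the smaller Tutte-Berge graph $G_1 = G \backslash (V(D_2) \cup \{u\})$, and use alternating-path exchanges to compare $D(G_1)$ with $D(G)$. The hard part is exactly this control of the Gallai-Edmonds data under the deletion of $\{u\} \cup V(D_2)$: one must simultaneously guarantee that the remaining components of $D(G)$ persist inside $D(G_1)$, so that the inductive hypothesis applies, and that the matched component $D_2$ may be taken to be a single vertex; it is precisely here that the extremal Tutte-Berge hypothesis on $G$, and not merely the structure of the subgraph $G_1$, must be invoked.
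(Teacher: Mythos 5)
Your first half is exactly the paper's argument, but the second half --- the ``components of $D(G)$ that are matched to $A(G)$'' --- is a gap you never close: you only sketch a plan (Hall's condition on a component graph, alternating-path exchanges) and explicitly flag its ``hard part'' as unresolved. The gap is in fact illusory, and the machinery you propose is unnecessary. The case split you set up is an artifact of fixing one maximum matching $M$ for all components at once. Instead, choose the matching \emph{per component}: if $D_0$ is any component of $D(G)$ and $v \in V(D_0)$, then by the very definition of $D(G)$ there is a maximum matching $M_v$ of $G$ that misses $v$. By Theorem \ref{thm_GE}(3), $M_v$ contains a near-perfect matching of $D_0$, which must be the one missing $v$; hence $M_v$ covers all of $V(D_0)\setminus\{v\}$ inside $D_0$ and cannot match any vertex of $A(G)$ into $D_0$. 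So $M_v$ splits across $V(D_0)$ and its complement, and your own ``clean half'' (Lemma \ref{lem_mat_addition}, Lemma \ref{lem_inheritance_of_TB}, then Lemma \ref{lem_critical}) applies to $D_0$ directly, forcing $|V(D_0)|=1$. Since $D_0$ was arbitrary, you are done --- no induction, no Hall-type argument, no control of the Gallai--Edmonds data under vertex deletion. This is precisely the paper's one-paragraph proof.
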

\begin{proof} By Lemma \ref{lem_components}, we may assume that $G$ is connected. Assume by contradiction that $D(G)$ has a connected component $G_1$ that is not an isolated vertex. By Theorem \ref{thm_GE}, $G_1$ is factor-critical. Let $G_2 = G \setminus V(G_1)$. Since $G_1 \subseteq D(G)$, there exists a maximum matching $M$ of $G$ that uncovers a vertex of $G_1$. This implies that $\mat(G) = \mat(G_1) + \mat(G_2)$. By Lemma \ref{lem_inheritance_of_TB}, we deduce that $G_1$ is Tutte-Berge, which is a contradiction to Lemma \ref{lem_critical}. The conclusion follows.
\end{proof}

We are now ready for the proof of a characterization of Tutte-Berge graphs.
\begin{proof}[Proof of Theorem \ref{thm_characterization}] By Lemma \ref{lem_isolatedpoints}, it remains to prove the sufficiency condition. Let $G_1$ be the induced subgraph of $G$ on $D(G) \cup A(G)$. By Theorem \ref{thm_GE}, $\mat(G_1) = |A(G)|$ and $\mat(G) = \mat(G_1) + |C(G)|/2$. We have that $D(G)$ is an independent set of $G$ and 
\begin{align*}
    |D(G)| &= |V(G_1)| - \mat(G_1) = |A(G)| + |V(G_1)| - 2 \mat(G_1)\\
    &= |N_G(D(G)) | + |V(G)| - 2 \mat(G).
\end{align*}
Hence, $G$ is Tutte-Berge.
\end{proof}

\begin{exm} The following graph is Tutte-Berge but is not K\"onig nor has a perfect matching.
   
\begin{center}
\begin{tikzpicture}[roundnode/.style={draw,shape=circle,fill=blue,minimum size=1mm}]
        \node[circle,fill,inner sep=1pt]      (u1)        at (-0.5,0.5)             {};
         \node[circle,fill,inner sep=1pt]      (u2)        at (-0.5,-0.5)             {};
          \node[circle,fill,inner sep=1pt]      (u3)        at (0,0)             {};
          \node[circle,fill,inner sep=1pt]      (u4)        at (1,0)             {};
          \node[circle,fill,inner sep=1pt]      (u5)        at (2,0)             {};
          \node[circle,fill,inner sep=1pt]      (u6)        at (2.5,0.5)             {};
          \node[circle,fill,inner sep=1pt]      (u7)        at (2.5,-0.5)             {};
            \draw (u1) -- (u3);
            \draw (u2) -- (u3);
            \draw (u3) -- (u4);
            \draw  (u4) -- (u5);
            \draw  (u4) -- (u6);
            \draw (u4) -- (u7); 
            \draw  (u5) -- (u6);
            \draw (u5) -- (u7); 
            \draw (u6) -- (u7); 
          \end{tikzpicture}
\end{center}   
\end{exm}

\begin{rem}
\begin{enumerate}
\item The Edmonds' blossom algorithm \cite{E} gives a polynomial time algorithm for the Gallai-Edmonds structure decomposition. Hence, it also yields a polynomial time algorithm for determining if a graph is Tutte-Berge.
    \item When $D(G)$ consists of isolated vertices only the induced subgraph of $G$ on $D(G) \cup A(G)$ is K\"onig. Hence, a Tutte-Berge graph decomposes into a K\"onig graph and a perfect matching graph.
    \item  Deming \cite{D} and Sterboul \cite{S} independently gave the first characterization for K\"onig graphs. Theorem \ref{thm_characterization} is a natural analog of the characterization of K\"onig graphs given by Lov\'asz \cite[Lemma 3.3]{L}.
\end{enumerate}
\end{rem}

\section{Regularity of normal Rees algebras of edge ideals}\label{sec_reg}

In this section, we compute the regularity of normal Rees algebras of edge ideals of graphs. First, we recall the description of the edge polytope of a graph by Ohsugi and Hibi \cite{OH}.

Let $G$ be a simple graph on $n$ vertices. We denote by $\e_1,\ldots, \e_n$ the canonical bases of $\RR^n$. The edge polytope of $G$, denoted by $P_G$ is the convex hull of $\{ \e_i + \e_j \mid \{i,j\} \text{ is an edge of } G\}$. Let $L$ be the hyperplane $L = \{x \in \RR^n \mid x_1 + \cdots + x_n = 2\}$. For each $i=1,\ldots, n$, we denote by $H_i^+$ the half-space $H_i^+ = \{ x \in \RR^n \mid x_i \ge 0\}$. For each independent set $T$ of $G$, we denote by $H^-_{G,T}$ the half-space 
$$H_{G,T}^- = \left \{x \in \RR^n \mid \sum_{i \in T} x_i \le \sum_{j \in N_G(T)} x_j \right \}.$$

Let $T$ be an independent set of $G$, the bipartite graph induced by $T$, denoted by $B_{G}(T)$, is the graph with vertex set $V(B_G(T)) = T \cup N_G(T)$ and edge set $E(B_G(T)) = \{ \{v,w\} \mid v \in T, w\in N_G(T)\}$. 

\begin{defn} A vertex $v$ of $G$ is said to be regular in $G$ if each connected component of $G \setminus v$ has at least one odd cycle.
\end{defn}

\begin{defn}
    An independent set $T$ of $G$ is said to be fundamental in $G$ if it satisfies the following conditions
    \begin{enumerate}
        \item the bipartite graph $B_G(T)$ induced by $T$ is connected;
        \item if $T \cup N_G(T) \neq V(G)$, then each connected component of $G \backslash V(B_G(T))$ has at least one odd cycle.
    \end{enumerate}
\end{defn}
We have the following description of $P_G$ \cite[Theorem 1.7]{OH}.
\begin{thm}\label{thm_OH} Assume that $G$ has at least one odd cycle. Let $R$ be the set of regular vertices of $G$ and $F$ the set of nonempty fundamental independent sets of $G$. Then 
$$P_G = L \cap \bigcap_{i \in R} H_i^+ \bigcap_{T \in F} H_{G,T}^-.$$
\end{thm}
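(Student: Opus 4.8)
The plan is to pass from $P_G$ to the cone it spans and exploit conic duality. Every generator $\e_i+\e_j$ has coordinate sum $2$, and any nonnegative combination $\sum_{\{i,j\}\in E(G)}\lambda_{ij}(\e_i+\e_j)$ has coordinate sum $2\sum\lambda_{ij}$; hence $P_G=C\cap L$, where $C=\Cone\{\e_i+\e_j\mid \{i,j\}\in E(G)\}$. One may assume $G$ is connected, so that the odd cycle hypothesis makes $G$ non-bipartite; the unsigned incidence vectors of a connected non-bipartite graph span $\RR^n$, so $C$ is full-dimensional, and since the functional $x_1+\cdots+x_n$ is positive on every generator, $C$ is also pointed. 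Its dual is
\[
C^{\ast}=\{y\in\RR^n\mid y_i+y_j\ge 0 \text{ for all } \{i,j\}\in E(G)\},
\]
which is therefore pointed and generated by its extreme rays, and biduality gives $C=\{x\mid \langle y,x\rangle\ge 0 \text{ for every }y\in C^{\ast}\}$. So it suffices to determine the extreme rays of $C^{\ast}$ and intersect the resulting halfspaces with $L$; the theorem amounts to the assertion that these rays are exactly the $\e_i$ with $i\in R$ and the $\sum_{j\in N_G(T)}\e_j-\sum_{i\in T}\e_i$ with $T\in F$.

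First I would record the membership facts that give the inclusion $P_G\subseteq L\cap\bigcap_i H_i^+\cap\bigcap_T H_{G,T}^-$, namely that each candidate lies in $C^{\ast}$. For $y=\e_i$ this is clear, and for $y=\sum_{N_G(T)}\e_j-\sum_T\e_i$ it follows because independence of $T$ forces any edge meeting $T$ to have its other endpoint in $N_G(T)$, so $y_i+y_j\ge 0$ on every edge. Translating back, $\langle\e_i,x\rangle\ge 0$ is the halfspace $H_i^+$ and $\langle\sum_{N_G(T)}\e_j-\sum_T\e_i,\,x\rangle\ge 0$ is $H_{G,T}^-$.

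The heart of the proof is the classification of the extreme rays of $C^{\ast}$. Given $y\in C^{\ast}$, partition $V(G)$ into $P=\{i\mid y_i>0\}$, $N=\{i\mid y_i<0\}$, $Z=\{i\mid y_i=0\}$, and let $E_0=\{\{i,j\}\in E(G)\mid y_i+y_j=0\}$ be the tight edges; feasibility immediately forces $N$ to be independent with $N_G(N)\subseteq P$. The vector $y$ spans an extreme ray precisely when the tight normals $\{\e_i+\e_j\mid \{i,j\}\in E_0\}$ have rank $n-1$, equivalently when the solution space $\{y'\mid y'_i+y'_j=0 \text{ on } E_0\}$ is one-dimensional. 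The decisive computation is that this solution space has dimension equal to the number of bipartite connected components of $(V(G),E_0)$, isolated vertices counted as bipartite components, because on a non-bipartite component the only alternating solution is $0$. Thus $y$ is an extreme ray iff $(V(G),E_0)$ has a single bipartite component $B$ with all other components non-bipartite, and then $y$ is supported on $B$, equal to $\pm c$ on its two color classes.

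It then remains to match the two shapes of $B$ with $R$ and $F$. If $B$ is a single vertex $\{i\}$ then $E_0=E(G\setminus i)$, so ``all other components non-bipartite'' says every component of $G\setminus i$ carries an odd cycle, i.e. $i\in R$, and $y$ is proportional to $\e_i$. Otherwise both color classes of $B$ are nonempty; taking $T$ to be the class on which $y$ is negative yields an independent set with $N_G(T)$ the positive class and $y$ proportional to $\sum_{N_G(T)}\e_j-\sum_T\e_i$, while $B=B_G(T)$. Here ``$B$ is one bipartite component'' is exactly connectedness of $B_G(T)$, and since every $G$-edge inside $Z=V(G)\setminus V(B_G(T))$ is automatically tight, ``the remaining components are non-bipartite'' is exactly the requirement that each component of $G\setminus V(B_G(T))$ contain an odd cycle; together these say $T\in F$. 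I expect the main obstacle to be precisely this last bookkeeping together with the rank/dimension formula for the tight-edge incidence system: one must prove that formula cleanly and then check that the correspondence between extreme rays and $R\sqcup F$ is exact, with the two fundamental conditions matching ``one bipartite component, the rest non-bipartite'' and no ray counted twice. A secondary point is the reduction of the general statement to the connected non-bipartite case, where alone $C^{\ast}$ is pointed and the extreme-ray description applies.
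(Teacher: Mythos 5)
The paper does not prove this statement at all: it is quoted verbatim from Ohsugi--Hibi \cite[Theorem 1.7]{OH}, so there is no in-paper argument to compare against. Judged on its own, your duality proof is essentially correct and is close in spirit to the standard facet analysis: writing $P_G=C\cap L$ with $C$ the cone on the vectors $\e_i+\e_j$, noting $C$ is full-dimensional and pointed for a connected non-bipartite $G$, and classifying the extreme rays of $C^{\ast}=\{y\mid y_i+y_j\ge 0 \text{ on } E(G)\}$ via the rank of the tight constraints is a complete and correct strategy. The key dimension count (the solution space of $y'_i+y'_j=0$ over the tight edges has dimension equal to the number of bipartite components of the tight graph, isolated vertices included) is right, and the translation of ``exactly one bipartite component'' into $i\in R$ (when that component is a single vertex) versus $T\in F$ (when it has two nonempty colour classes, $T$ being the negative one, with $N_G(T)$ necessarily the positive one by connectedness of $B_G(T)$) is exactly the correspondence one needs.

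One point deserves correction rather than a shrug: ``one may assume $G$ is connected'' is not a harmless reduction. For disconnected $G$ the statement is actually false: take $G$ a triangle on $\{1,2,3\}$ together with a disjoint edge $\{4,5\}$. Then $R=\emptyset$ and $F=\{\{4\},\{5\}\}$, so the right-hand side is the unbounded set $\{x\in L\mid x_4=x_5\}$, while $P_G$ is a bounded simplex. Connectedness is a standing hypothesis in \cite{OH} that the paper's transcription of Theorem \ref{thm_OH} omits; your proof implicitly restores it (and indeed needs it, since otherwise $C$ is not full-dimensional and $C^{\ast}$ is not pointed, so the extreme-ray description breaks down). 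This causes no trouble for the paper, which only applies the theorem to the cone graph $G^{\ast}$, always connected and non-bipartite once $G$ has an edge; but you should state the connectedness hypothesis explicitly rather than present it as a reduction. With that fixed, and with the two routine verifications you defer (that each $y$ indexed by $R\sqcup F$ really lies in $C^{\ast}$ with the asserted tight graph, and that the two cases of the classification are exhaustive and disjoint) written out, the argument is complete.
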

Let $G^*$ be the cone graph over $G$, i.e., $V(G^*) = V(G) \cup \{n+1\}$ and $$E(G^*) = E(G) \cup \{ \{i,n+1\} \mid i =1, \ldots,n\}.$$

We now have some preparation lemmas.
\begin{lem}\label{lem_reg_G*} Assume that $G$ has more than one edge. Then each vertex $v \in V(G)$ is a regular vertex of $G^*$.    
\end{lem}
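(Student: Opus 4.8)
The plan is to verify directly that, for each $v \in V(G)$, every connected component of $G^* \setminus v$ contains an odd cycle. The first step is to observe that there is in fact only one component to worry about: in $G^*$ the apex $n+1$ is joined to every vertex of $V(G)$, so in $G^* \setminus v$ it remains joined to every vertex of $V(G) \setminus \{v\}$. Hence $G^* \setminus v$ is connected (its vertex set is $(V(G)\setminus\{v\})\cup\{n+1\}$, which is nonempty since $G$ has at least two edges and so at least three vertices), and the lemma reduces to producing a single odd cycle inside it.

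To produce such a cycle I would look for an edge $\{a,b\} \in E(G)$ with $a \neq v$ and $b \neq v$; given such an edge, the three edges $\{a,b\}$, $\{a,n+1\}$, $\{b,n+1\}$ all lie in $G^* \setminus v$ and form a triangle, which is the desired odd cycle. Thus the whole problem collapses to the combinatorial claim that for every $v \in V(G)$ the induced subgraph $G \setminus v$ still contains an edge; equivalently, that no single vertex of $G$ meets all of its edges.

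This edge-existence is the step I expect to be the \emph{main obstacle}, and it is where the hypotheses genuinely enter. The natural way to guarantee it is through an odd cycle of $G$: fixing such a cycle $C$, if $v \notin V(C)$ then $C$ survives intact in $G \setminus v$ and any of its edges works, while if $v \in V(C)$ then, since $|V(C)| \geq 3$, deleting $v$ from $C$ leaves a path on at least two vertices and hence at least one edge of $C$ inside $G \setminus v$. In either case $G \setminus v$ has an edge and the triangle above exists. It is worth flagging that the bare count ``$G$ has more than one edge'' does not by itself force this conclusion -- when $G$ is a star $K_{1,m}$ and $v$ is its center, $G \setminus v$ is edgeless and $G^* \setminus v$ is a bipartite star with no odd cycle -- so the argument must, and does, rely on $G$ being non-bipartite (equivalently, possessing an odd cycle), which is the operative setting of this section, cf.\ Theorem~\ref{thm_OH}.
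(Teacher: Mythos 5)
Your route is the same as the paper's: $G^*\setminus v$ is the cone over $G\setminus v$, hence connected, and a surviving edge $\{a,b\}$ of $G\setminus v$ together with the apex $n+1$ yields a triangle, so the unique component of $G^*\setminus v$ contains an odd cycle. Where you differ is in justifying that $G\setminus v$ still has an edge, and there you have spotted a genuine defect: the paper's proof simply asserts that ``since $G$ has more than one edge, $G\setminus v$ has at least one edge,'' which is exactly the false implication you flag (take $G=K_{1,m}$ with $m\ge 2$ and $v$ its center; then $G\setminus v$ is edgeless, $G^*\setminus v$ is a bipartite star, and $v$ is not regular in $G^*$, so Lemma~\ref{lem_reg_G*} is false as stated). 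Your repair---extracting the surviving edge from an odd cycle of $G$, which loses at most two of its at least three edges when $v$ is deleted---is correct, and the extra hypothesis it requires (that $G$ is non-bipartite) is available at the one place the lemma is invoked, namely in the proof of Theorem~\ref{thm_main} after the reduction to the case that $G$ is not bipartite. So your proof is sound where the paper's is not; the lemma should be stated with the added hypothesis that $G$ has an odd cycle (or, minimally, that no single vertex of $G$ meets every edge), and your write-up supplies the argument the paper omits.
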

\begin{proof} Since $G^* \backslash v$ is the cone graph over $G\backslash v$, it is connected. Furthermore, since $G$ has more than one edge, $G\backslash v$ has at least one edge; this edge and the new vertex in $G^*$ form a triangle in $G^*\backslash v$. The conclusion follows.    
\end{proof}

\begin{lem}\label{lem_reg} Let $G$ be a simple graph on $n$ vertices. Assume that $G$ has at least two edges and $\R(G)$ is normal. Let $q_0 = \min \{ q \ge 1 \mid q (P_{G^*} \setminus \partial P_{G^*}) \cap \ZZ^{n+1} \neq \emptyset\}$, where $\partial P_{G^*}$ is the boundary of $P_{G^*}$. Then 
$$\reg (\R(G)) = n + 1 - q_0.$$
\end{lem}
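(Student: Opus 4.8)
The plan is to connect the Castelnuovo-Mumford regularity of the normal Rees algebra $\R(G)$ to the combinatorial data of the edge polytope $P_{G^*}$ of the cone graph, and then to identify the invariant $q_0$ as governing the regularity. Since $\R(G)$ is normal, it is a normal semigroup ring, and by the Danilov-Stanley formula the canonical module $\omega_{\R(G)}$ is spanned by the lattice points in the relative interior of the cone spanned by the generators. The key geometric fact I would use is that the generators of $\R(G)$ correspond to the lattice points $\e_i$ (for the variables $x_i$) together with the lattice points $\e_i + \e_j + \e_{n+1}$ (for the monomials $x_ix_jt$), and that these are precisely the vertices $\e_i + \e_j$ of the edge polytope $P_{G^*}$ of the cone graph $G^*$, under the standard homogenization that turns $\R(G)$ into the Ehrhart-type algebra of $P_{G^*}$. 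This reduction is why we pass from $G$ to $G^*$: coning over $G$ is the algebraic operation that homogenizes the Rees algebra into a toric ring associated to a single polytope.

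First I would make the identification $\R(G) \cong \k[P_{G^*}]$ precise as graded $\k$-algebras, so that the $\ZZ$-grading on $\R(G)$ matches the dilation grading on the Ehrhart ring of $P_{G^*}$; here the total degree of a lattice point is read off from the last coordinate (the homogenizing coordinate coming from $t$ and the cone vertex $n+1$). Lemma \ref{lem_reg_G*} guarantees that every original vertex of $G$ is regular in $G^*$, which ensures that the half-space description in Theorem \ref{thm_OH} applies cleanly and that $P_{G^*}$ is a full-dimensional lattice polytope of dimension $n$ sitting in the hyperplane $L$. Next I would invoke the formula relating the regularity of a normal (hence Cohen-Macaulay, by Hochster's theorem) toric ring to the smallest dilation at which the interior of the polytope contains a lattice point: if $A = \k[P]$ is the normal Ehrhart ring of an $n$-dimensional lattice polytope $P$, then $\reg(A) = (n+1) - q_0$ where $q_0 = \min\{q \ge 1 \mid \operatorname{int}(qP) \cap \ZZ^{n+1} \neq \emptyset\}$. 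This is the standard statement that the $a$-invariant of the Ehrhart ring equals $-q_0$, and that regularity of a Cohen-Macaulay graded ring equals $a$-invariant plus Krull dimension; here $\dim \R(G) = n+1$.

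The main steps in order are therefore: (1) set up the isomorphism $\R(G) \cong \k[P_{G^*}]$ and record that normality of $\R(G)$ makes $\k[P_{G^*}]$ a normal, and hence Cohen-Macaulay, standard graded toric algebra of dimension $n+1$; (2) compute the canonical module via Danilov-Stanley so that its generators in lowest degree correspond exactly to the interior lattice points of the minimal dilation $q_0 P_{G^*}$; (3) read off the $a$-invariant as $a(\R(G)) = -q_0$ from the lowest degree of the canonical module; and (4) conclude $\reg(\R(G)) = a(\R(G)) + \dim \R(G) = -q_0 + (n+1)$, which is the asserted formula. The passage to the open polytope $P_{G^*} \setminus \partial P_{G^*}$ in the statement of $q_0$ is exactly the relative interior appearing in the Danilov-Stanley description, so this matches term by term.

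The hard part will be verifying, with complete care, that the $\ZZ$-grading under which $\reg(\R(G))$ is computed is the one induced by the dilation of $P_{G^*}$ rather than some coarser or finer grading, since a mismatch here would shift the formula; in particular I must check that $\R(G)$ is standard graded in the relevant grading so that the clean identity $\reg = a\text{-invariant} + \dim$ holds. A subtle related point is confirming that $P_{G^*}$ is genuinely $n$-dimensional and that the lattice $\ZZ^{n+1} \cap (\text{affine span})$ used to count interior points is the correct lattice for the normal semigroup of $\R(G)$ — i.e. that the semigroup is saturated in the right ambient lattice, which is exactly what normality of $\R(G)$ (equivalently the odd cycle condition via \cite{OH, HH}) provides. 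Once the grading and the lattice are pinned down, the remaining computation is the routine application of the Danilov-Stanley formula and the Cohen-Macaulay regularity identity.
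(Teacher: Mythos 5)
Your proposal is correct and follows essentially the same route as the paper, which simply defers to the proof of \cite[Theorem 2.2]{HH}: that proof identifies $\R(G)$ with the edge ring of the cone graph $G^*$ (hence with the toric ring of $P_{G^*}$), uses normality and Hochster's theorem to get Cohen--Macaulayness, computes the $a$-invariant as $-q_0$ via Danilov--Stanley, and concludes $\reg = \dim + a = n+1-q_0$. You have reconstructed that argument in full, including the correct points of care (the standard grading on the edge ring and the fact that $P_{G^*}$ has dimension $n$ inside the hyperplane $L$), so nothing is missing.
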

\begin{proof} The conclusion follows from the proof of \cite[Theorem 2.2]{HH}.
\end{proof}

\begin{lem}\label{lem_red_1} Assume that $q < n$ and $\a = (a_1,\ldots,a_{n+1}) \in q (P_{G^*} \backslash \partial P_{G^*})$. If $a_i > 1$ then $\b \in q(P_{G^*} \backslash \partial P_{G^*})$ where $\b = \a + \e_{n+1} - \e_i$.
\end{lem}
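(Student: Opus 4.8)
The plan is to verify directly that $\b$ satisfies every strict inequality cutting out the relative interior $q(P_{G^*}\setminus\partial P_{G^*})$, using the description of $P_{G^*}$ from Theorem \ref{thm_OH}. Since $G$ has at least two edges, $G^*$ has an odd cycle, so the theorem applies: a lattice point $\z$ lies in $q(P_{G^*}\setminus\partial P_{G^*})$ if and only if $\sum_{j=1}^{n+1}z_j=2q$, $z_j>0$ for every regular vertex $j$ of $G^*$, and $\sum_{k\in T}z_k<\sum_{l\in N_{G^*}(T)}z_l$ for every nonempty fundamental independent set $T$ of $G^*$. The move $\b=\a+\e_{n+1}-\e_i$ preserves the coordinate sum, so $\sum_j b_j=2q$ at once, and the whole task reduces to checking the positivity conditions and the fundamental inequalities for $\b$.

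For positivity, recall from Lemma \ref{lem_reg_G*} that every $j\in\{1,\dots,n\}$ is regular in $G^*$; as $\a$ is an integral interior point, this forces $a_j\ge 1$ for all such $j$, and the hypothesis gives $a_i\ge 2$. Hence $b_i=a_i-1\ge 1$, $b_{n+1}=a_{n+1}+1$ only grows, and every other coordinate is unchanged, so $b_j>0$ at all regular vertices. For the fundamental inequalities I would split on whether the cone vertex lies in $T$. Writing $\phi_T(\z)=\sum_{l\in N_{G^*}(T)}z_l-\sum_{k\in T}z_k$, the key point is that for any nonempty $T\subseteq\{1,\dots,n\}$ the cone vertex is adjacent to all of $G$, so $n+1\in N_{G^*}(T)$; a short case analysis on whether $i\in T$, $i\in N_{G^*}(T)$, or neither then shows $\phi_T(\b)-\phi_T(\a)\in\{0,1,2\}$, whence $\phi_T(\b)\ge\phi_T(\a)>0$ and these inequalities are inherited by $\b$.

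The one delicate case, which I expect to be the main obstacle, is $T=\{n+1\}$: this is the unique fundamental set meeting the cone vertex (any independent set containing $n+1$ equals $\{n+1\}$), and it is precisely the inequality that the move \emph{tightens}, since $\phi_{\{n+1\}}(\b)=\phi_{\{n+1\}}(\a)-2$. In interior terms this inequality reads $b_{n+1}<q$, so I must rule out the possibility that shifting mass onto the cone vertex drives $b_{n+1}$ up to $q$. Here the hypotheses $q<n$ and $a_i>1$ enter through integrality. By Lemma \ref{lem_reg_G*} all of $a_1,\dots,a_n$ are at least $1$ and $a_i\ge 2$, so $\sum_{j=1}^n a_j\ge n+1$; together with $\sum_{j=1}^{n+1}a_j=2q$ this gives $a_{n+1}\le 2q-n-1$, and since $q\le n-1$ we conclude $a_{n+1}\le q-2$. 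Therefore $b_{n+1}=a_{n+1}+1\le q-1<q$, so the $T=\{n+1\}$ inequality holds strictly for $\b$ as well, and $\b\in q(P_{G^*}\setminus\partial P_{G^*})$ as claimed.
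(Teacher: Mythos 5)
Your proof is correct and takes essentially the same route as the paper's: a direct verification of the Ohsugi--Hibi inequalities for $\b$, splitting into the positivity conditions, the fundamental sets $T\subseteq\{1,\dots,n\}$ (where the cone vertex $n+1\in N_{G^*}(T)$ makes the inequality only improve), and the single tightened case $T=\{n+1\}$. Both you and the paper resolve that last case with the same integrality estimate $\sum_{j=1}^n a_j\ge n+1$ combined with $q<n$.
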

\begin{proof}Since $a_i > 1$, we have that $b_i \ge 1$. Thus, $\b \in H_j^+ \backslash \partial H_j^+$ for all $j = 1, \ldots,n+1$. Now, let $T$ be an independent set of $G^*$. If $T = \{n+1\}$, then we have $b_{n+1} = 2q - (b_1 + \cdots + b_n)$. Since $q < n$ and $b_i > 0$ for all $i= 1, \ldots, n$, we deduce that $b_{n+1} < n \le \sum_{i=1}^n b_i$. Hence, $\b \in H_{G^*,T}^- \backslash \partial H_{G^*,T}^-.$ Now, assume that $T$ is an independent set of $G$. If $i \in T$ then 
\begin{align*}
    \sum_{j \in T} b_j  &= \sum_{j \in T} a_j -1 < \sum_{j\in N_G(T)} a_j + a_{n+1} - 1  =  \sum_{j\in N_G(T)} b_j + b_{n+1} -2.
\end{align*}
Now, assume that $i \notin T$. Then 
\begin{align*}
    \sum_{j \in T} b_j  &= \sum_{j \in T} a_j  < \sum_{j\in N_G(T)} a_j + a_{n+1} - 1 \le \sum_{j\in N_G(T)} b_j + b_{n+1} -1.
\end{align*}
Hence, $\b \in H_{G^*,T}^-$ for all independent sets $T$ of $G$. The conclusion follows.
\end{proof}

\begin{lem}\label{lem_inP} Let $G$ be a simple graph. Let $\p = (1,1,\ldots,1,n - 2\mat(G))$ be a point in $\RR^{n+1}$. Then $\p \in qP_{G^*}$, where $q = n - \mat(G)$.
\end{lem}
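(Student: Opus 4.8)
The plan is to use the elementary description of dilated edge polytopes. Since $P_{G^*}$ is by definition the convex hull of the vectors $\e_i + \e_j$ with $\{i,j\} \in E(G^*)$, a point lies in $qP_{G^*}$ exactly when it can be written as $\sum_e \mu_e (\e_i + \e_j)$ with all $\mu_e \ge 0$ and $\sum_e \mu_e = q$, the sum ranging over edges $e = \{i,j\}$ of $G^*$. So the entire problem reduces to exhibiting one such nonnegative combination realizing $\p$ with total weight exactly $q = n - \mat(G)$. As a preliminary sanity check I would record that the coordinate sum of $\p$ equals $n + (n - 2\mat(G)) = 2q$, which is consistent with membership in $qL$ and which any combination of edge vectors (each of coordinate sum $2$ and total weight $q$) must satisfy.

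The construction I would use is driven by a maximum matching. Fix a maximum matching $M$ of $G$, so $|M| = \mat(G)$, and let $U \subseteq V(G)$ be the set of the $n - 2\mat(G)$ vertices left uncovered by $M$. I assign weight $1$ to each matching edge, using the vector $\e_i + \e_j$ for $\{i,j\} \in M$, and weight $1$ to each cone edge $\e_u + \e_{n+1}$ for $u \in U$ (these are edges of $G^*$ by construction). The matching edges are edges of $G \subseteq G^*$, so every edge used is genuinely an edge of $G^*$, and all coefficients equal $1 \ge 0$.

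It then remains to verify that this combination equals $\p$ and carries total weight $q$. The matching edges contribute $\sum_{\{i,j\}\in M}(\e_i+\e_j) = \sum_{v \text{ covered}} \e_v$, giving a $1$ in each of the $2\mat(G)$ covered coordinates and nothing in coordinate $n+1$; the cone edges contribute $\sum_{u\in U}(\e_u + \e_{n+1}) = \sum_{u\in U}\e_u + (n-2\mat(G))\e_{n+1}$, supplying a $1$ in each uncovered coordinate and accumulating $|U| = n-2\mat(G)$ in coordinate $n+1$. Adding these yields $\sum_{v=1}^n \e_v + (n-2\mat(G))\e_{n+1} = \p$, while the number of edges used is $\mat(G) + (n - 2\mat(G)) = n - \mat(G) = q$. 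Dividing through by $q$ presents $\p/q$ as a genuine convex combination of vertices of $P_{G^*}$, hence $\p \in qP_{G^*}$.

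There is no serious obstacle here; the proof is essentially a bookkeeping argument, and the only point requiring care is confirming that the matching edges and the cone edges together hit each of the first $n$ coordinates exactly once (so the covered and uncovered vertices partition $[n]$) and that the resulting weights sum to $q$ rather than to some other value. The one genuinely structural input is the identity $|U| = n - 2\mat(G)$ for a maximum matching, which is exactly the Tutte--Berge count of uncovered vertices already invoked elsewhere in the paper.
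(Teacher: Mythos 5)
Your proof is correct, but it takes a genuinely different route from the paper's. The paper verifies membership via the half-space description of the edge polytope: it checks that $\p$ lies in every $H_i^+$ and in every $H_{G^*,T}^-$, the latter by invoking the Tutte--Berge inequality of Lemma \ref{lem_bound_T}, and then relies on the Ohsugi--Hibi facet description (Theorem \ref{thm_OH}) to conclude. You instead work with the vertex description, exhibiting $\p$ explicitly as a weight-$q$ nonnegative combination of edge vectors: weight $1$ on each edge of a maximum matching $M$ and weight $1$ on each cone edge $\{u,n+1\}$ over the $n-2\mat(G)$ vertices uncovered by $M$. Your bookkeeping checks out (each of the first $n$ coordinates is hit exactly once since covered and uncovered vertices partition $[n]$, coordinate $n+1$ receives $n-2\mat(G)$, and the total weight is $\mat(G)+(n-2\mat(G))=q$). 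Your argument is more elementary and self-contained --- it needs neither Lemma \ref{lem_bound_T} nor Theorem \ref{thm_OH}, and it works even without the standing hypothesis that $G^*$ contain an odd cycle. What the paper's approach buys in exchange is that the same inequalities, sharpened to strict ones when $G$ is not Tutte--Berge, are exactly what the proof of Theorem \ref{thm_main} uses to place $\p$ in the \emph{interior} $q(P_{G^*}\setminus\partial P_{G^*})$; your convex combination proves the lemma as stated but would not by itself yield that interior statement.
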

\begin{proof}
    Clearly, $\p \in H_i^+$ for all $i = 1, \ldots, n+1$. Let $T$ be an independent set of $G^*$. Then either $T = \{n+1\}$ or $T$ is an independent set of $G$. If $T = \{n+1\}$, then $n - 2 \mat(G) < n$, so $\p \in H_{G^*,T}^-$. If $T$ is an independent set of $G$, the conclusion follows from the definition and Lemma \ref{lem_bound_T}.
\end{proof}

\begin{lem}\label{lem_additivity_fundamental} Let $G$ be a simple graph with connected components $G_1, \ldots, G_c$. Assume that $T_1,\ldots,T_c$ are fundamental independent sets of $G_1,\ldots,G_c$. Then $T = T_1 \cup \cdots \cup T_c$ is a fundamental independent set of $G^*$.    
\end{lem}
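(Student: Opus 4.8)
The plan is to verify the two defining conditions of a fundamental independent set directly for $T=T_1\cup\cdots\cup T_c$ in $G^*$, using throughout that the apex vertex $n+1$ is adjacent to every vertex of $G$. First I would record that $T$ is an independent set of $G^*$: each $T_i$ is independent in $G_i$, there are no edges of $G$ between distinct components, and $n+1\notin T$, so no edge of $G^*$ lies inside $T$. The key preliminary computation is that, whenever some $T_i\neq\emptyset$, we have $N_{G^*}(T)=N_G(T)\cup\{n+1\}$ and therefore
$$V(B_{G^*}(T))=T\cup N_G(T)\cup\{n+1\}=\{n+1\}\cup\bigcup_{i=1}^c V(B_{G_i}(T_i)).$$

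For condition (1) I would argue that the apex glues the components together. Each $B_{G_i}(T_i)$ is connected by hypothesis and, when $T_i\neq\emptyset$, contains a vertex of $T_i$; since $n+1$ is adjacent to every vertex of $T=\bigcup_i T_i$, it is a common neighbour of all the pieces $B_{G_i}(T_i)$, and any vertex of $B_{G^*}(T)$ reaches $n+1$ by first moving inside some $B_{G_i}(T_i)$ to a vertex of $T_i$ and then crossing the edge to $n+1$. Hence $B_{G^*}(T)$ is connected. This is exactly the place where coning is essential: $B_G(T)$ itself is typically disconnected once $G$ has several components.

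For condition (2) I would compute $G^*\setminus V(B_{G^*}(T))$. Because $n+1$ is deleted, the remaining graph is the induced subgraph of $G$ on $V(G)\setminus(T\cup N_G(T))$, which splits as the disjoint union $\bigsqcup_{i=1}^c\big(G_i\setminus V(B_{G_i}(T_i))\big)$ since the $G_i$ are the connected components of $G$. Consequently $T\cup N_{G^*}(T)\neq V(G^*)$ holds precisely when $T_i\cup N_{G_i}(T_i)\neq V(G_i)$ for some $i$, and every connected component of $G^*\setminus V(B_{G^*}(T))$ is a connected component of some $G_i\setminus V(B_{G_i}(T_i))$ with $T_i\cup N_{G_i}(T_i)\neq V(G_i)$. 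For each such $i$, condition (2) of fundamentality of $T_i$ in $G_i$ supplies an odd cycle in that component, which is what condition (2) for $G^*$ demands.

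The verification is bookkeeping rather than deep; the only genuine idea is the role of the apex in condition (1). The point requiring care is the degenerate indices: if $T_i=\emptyset$ is fundamental in $G_i$, then its condition (2) forces $G_i$ itself to contain an odd cycle, so the untouched component $G_i$ still satisfies condition (2) for $G^*$, while the trivial case $T=\emptyset$ can be discarded at the outset. I would dispose of these first so that the gluing argument runs with all relevant $T_i$ nonempty.
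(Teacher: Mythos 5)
Your proof is correct and follows the same route as the paper's: identify $B_{G^*}(T)$ as the union of the $B_{G_i}(T_i)$ together with the apex $n+1$ (which makes it connected), observe that $G^*\setminus V(B_{G^*}(T))$ decomposes as the disjoint union of the $G_i\setminus V(B_{G_i}(T_i))$, and invoke condition (2) for each $T_i$. The paper states exactly these two facts and stops; your write-up merely supplies the bookkeeping (including the degenerate $T_i=\emptyset$ indices) that the paper leaves implicit.
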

\begin{proof}
    Since $B_{G^*}(T) = B_{G_1}(T_1) \cup \cdots \cup B_{G_c}(T_c) \cup \{n+1\}$, $B_{G^*}(T)$ is connected. Furthermore, $G^* \setminus B_{G^*}(T) = (G_1 \backslash B_{G_1}(T_1)) \cup \cdots \cup (G_c \backslash B_{G_c}(T_c))$. The conclusion follows from the definition of fundamental independent sets.
\end{proof}
    
\begin{lem}\label{lem_Tutte_Berge_witness} Assume that $G$ is a Tutte-Berge graph. Then there exists a fundamental independent set $T$ of $G^*$ such that $|T| = |N_G(T)| + |V(G) | - 2\mat(G) $.    
\end{lem}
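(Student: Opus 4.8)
The plan is to use the Gallai-Edmonds decomposition together with Theorem \ref{thm_characterization}. Since $G$ is Tutte-Berge, Theorem \ref{thm_characterization} tells us that $D(G)$ consists of isolated vertices only. The natural candidate for the witness is the independent set $T = D(G)$, since the computation in the proof of Theorem \ref{thm_characterization} already shows that $D(G)$ is an independent set of $G$ satisfying $|D(G)| = |N_G(D(G))| + |V(G)| - 2\mat(G)$. So the equality $|T| = |N_G(T)| + |V(G)| - 2\mat(G)$ is essentially free; the real content of the lemma is that this $T$ can be taken to be \emph{fundamental} in $G^*$.

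First I would verify that $B_{G^*}(T)$ is connected. Because $n+1$ is adjacent to every vertex of $G$, and in particular $N_{G^*}(T) = N_G(T) \cup \{n+1\}$, the bipartite graph $B_{G^*}(T)$ has $n+1$ in one shore (as a neighbor of every element of $T$) which immediately makes $B_{G^*}(T)$ connected regardless of the structure of $T$ inside $G$. This is exactly the role the cone vertex plays, and it is why working in $G^*$ rather than $G$ is the right move. Second I would check the odd-cycle condition: if $T \cup N_{G^*}(T) \neq V(G^*)$, then each connected component of $G^* \setminus V(B_{G^*}(T))$ must contain an odd cycle. Here $V(B_{G^*}(T)) = D(G) \cup N_G(D(G)) \cup \{n+1\} = D(G) \cup A(G) \cup \{n+1\}$, since by the Gallai-Edmonds decomposition $N_G(D(G)) \subseteq A(G)$ and in fact every vertex of $A(G)$ is adjacent to some vertex of $D(G)$. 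Thus $G^* \setminus V(B_{G^*}(T))$ is the induced subgraph on $C(G)$, which by Theorem \ref{thm_GE}(2) has a perfect matching.

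The main obstacle is precisely this last point: I must show that each connected component of the subgraph induced on $C(G)$ contains an odd cycle. A priori a perfect-matching graph need not contain any odd cycle, so this does not follow from Theorem \ref{thm_GE} alone. The leverage comes from the normality hypothesis carried implicitly through the standing assumptions of the section, namely that $G$ satisfies the odd cycle condition and has at most one non-bipartite component. Under the odd cycle condition, any two vertex-disjoint odd cycles are joined by an edge, and a bipartite component of $C(G)$ would have to be absorbed; I would argue that a bipartite component $C_0$ of $C(G)$ would itself be a separate connected component of $G$ (since $C(G)$ is by definition non-adjacent to $D(G)$, and it is matched internally), and a bipartite component is already a perfect matching graph handled by the $T = \emptyset$ case — so I would reduce to the case where every component of $C(G)$ that is relevant is non-bipartite and hence contains an odd cycle. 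Alternatively, if $C(G) \neq \emptyset$ forces $T \cup N_{G^*}(T) \neq V(G^*)$, I would invoke the decomposition in Remark that a Tutte-Berge graph splits into a König part and a perfect-matching part, and treat each connected component separately using Lemma \ref{lem_components} and Lemma \ref{lem_additivity_fundamental}, so that the fundamental set in $G^*$ is built componentwise. Handling the interaction between the cone vertex and the componentwise construction cleanly is where I expect the argument to require the most care.
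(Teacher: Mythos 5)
Your reduction and your identification of the crux are both right: with $T=D(G)$ the equality $|T|=|N_G(T)|+|V(G)|-2\mat(G)$ comes for free from Theorem \ref{thm_characterization}, $B_{G^*}(T)$ is connected because of the cone vertex, and the whole content is whether every component of $G^*\setminus V(B_{G^*}(T))$, i.e.\ of the induced subgraph on $C(G)$, contains an odd cycle. But your proposed resolutions of that point do not work. The claim that a bipartite component $C_0$ of $C(G)$ ``would itself be a separate connected component of $G$'' is false: $C(G)$ is non-adjacent to $D(G)$ by definition, but it can be (and typically is) adjacent to $A(G)$. For a concrete counterexample, take the paper's example graph (two pendants $u_1,u_2$ on $u_3$, which is joined to a $K_4$ on $u_4,\dots,u_7$) and attach a pendant edge $u_8u_9$ with $u_8$ adjacent to $u_3$. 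This graph is connected, satisfies the odd cycle condition, and is Tutte--Berge with $D(G)=\{u_1,u_2\}$, $A(G)=\{u_3\}$, $C(G)=\{u_4,\dots,u_9\}$; the induced subgraph on $C(G)$ has the bipartite component $\{u_8,u_9\}$, which is not a component of $G$ and contains no odd cycle. So $T=D(G)$ is simply not fundamental in $G^*$ here, and neither normality (which, note, is not even a hypothesis of the lemma) nor a componentwise reduction via Lemma \ref{lem_components} / Lemma \ref{lem_additivity_fundamental} can rescue it, since the offending bipartite piece lives inside a single connected component of $G$.

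The paper's fix is to change the witness rather than argue that $D(G)$ works: set $T=D(G)\cup T_1\cup\cdots\cup T_c$, where $H_1,\dots,H_c$ are the components of $C(G)$ and $T_i$ is empty if $H_i$ is non-bipartite and a maximum independent set of $H_i$ if $H_i$ is bipartite. Since each $H_i$ has a perfect matching, a maximum independent set of a bipartite $H_i$ satisfies $|T_i|=|N_{H_i}(T_i)|=|V(H_i)|/2$, so the Tutte--Berge equality is preserved; $T$ stays independent because distinct $H_i$ are non-adjacent and $C(G)$ is non-adjacent to $D(G)$; and now $G\setminus V(B_G(T))$ consists exactly of the non-bipartite $H_j$, each of which has an odd cycle. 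In the example above this amounts to taking $T=\{u_1,u_2,u_9\}$ instead of $\{u_1,u_2\}$. This enlargement step is the missing idea in your write-up.
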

\begin{proof} By Lemma \ref{lem_additivity_fundamental}, we may assume that $G$ is connected. If $G$ is bipartite, we can take $T$ to be the maximum independent set of $G$. Thus, we may assume that $G$ is not bipartite. If $G$ has a perfect matching, we may take $T = \emptyset$. Thus, we may assume that $G$ does not have a perfect matching. By Theorem \ref{thm_characterization}, we have that $|P| = |N_G(P)| + |V(G)| - 2 \mat(G)$, where $P = D(G)$. By definition $V(B_G(P)) = D(G) \cup A(G)$. Let $H_1, \ldots, H_c$ be the connected components of $C(G)$. Since $C(G)$ has a perfect matching, $H_1, \ldots, H_c$ have a perfect matching. For each $i = 1, \ldots, c$, we set 
$$T_i  = \begin{cases}
    \emptyset &\text{ if } H_i \text{ is non-bipartite},\\
    \text{ a maximum indepdent set of } H_i & \text{ if } H_i \text{ is bipartite}.
\end{cases}$$
Let $T = P \cup T_1 \cup \cdots \cup T_c$. Then $T$ is an independent set of $G$ and have $|T| = |N_G(T)| + |V(G)| - 2 \mat(G)$. Furthermore, $G \setminus B_G(T) = \bigcup_j H_j$, where the union is taken over the indices $j$ such that $H_j$ is non-bipartite. Note that $B_{G^*}(T) = B_G(T) \cup \{n+1\}$, hence $B_{G^*}(T)$ is connected. By definition, $T$ is a fundamental set of $G^*$. The conclusion follows.
\end{proof}
We are now ready for the proof of Theorem \ref{thm_main}.
\begin{proof}[Proof of Theorem \ref{thm_main}] We may assume that $G$ does not have a perfect matching. Note that $\R(G)$ is normal by assumption. 

First, assume that $G$ is not Tutte-Berge. In particular, $n - 2\mat(G) > 0$ and for any independent set $T$ of $G$ we have $|T| < |N_G(T)| + n - 2 \mat(G)$. Let $\p = (1,\ldots,1,n-2\mat(G))$ be a point in $\ZZ^{n+1}$. Hence, $\p \in q(P_{G^*} \backslash  \partial P_{G^*})$ where $q = n - \mat(G)$. By Lemma \ref{lem_reg}, we have 
$$\reg (\R(G)) = n+1 - q_0 \ge n+1 - q = n+1 - (n-\mat(G)) = \mat(G) + 1.$$
By \cite[Theorem 2.2]{HH}, $\reg (\R(G)) = \mat(G) + 1$.

Now, assume that $G$ is Tutte-Berge. By \cite[Theorem 4.2]{CR} and \cite[Corollary 2.3]{HH}, we may assume that $G$ is not bipartite and does not have a perfect matching. Since $\R(G)$ is normal, $G$ can have at most one non-bipartite connected component. Let $G_1$ be the unique non-bipartite connected component of $G$. By Lemma \ref{lem_components}, $G_1$ is Tutte-Berge. By Lemma \ref{lem_Tutte_Berge_witness}, there exists a fundamental set $T_1$ of $G_1^*$  such that $|T_1| = |N_{G_1}(T_1)|  + |V(G_1)| - 2\mat(G_1)$. Let $G_2, \ldots, G_c$ be bipartite connected components of $G$. For each $i = 2, \ldots, c$, let $T_i$ be a maximum independent set of $G_i$. Then $T = T_1 \cup \cdots \cup T_c$ is a fundamental set of $G^*$ such that $|T| = |N_G(T)| + |V(G)| - 2 \mat(G)$. Since $G$ does not have a perfect matching, $|T| \ge 1$. 

We will now prove that $\reg (\R(G)) = \mat(G)$. By Lemma \ref{lem_reg}, it suffices to prove that $q_0 \ge n - \mat(G) + 1$. Indeed, let $\a = (a_1, \ldots,a_{n+1})$ be a point in $q_0 (P_{G^*} \backslash \partial P_{G^*})$. By Lemma \ref{lem_reg_G*}, each $i \in [n]$ is a regular vertex of $G^*$. Hence, $a_i \ge 1$ for all $i = 1,\ldots, n$. Now, assume by contradiction that $q_0 \le n - \mat(G) < n$. By Lemma \ref{lem_red_1}, we may assume that $a_i = 1$ for all $i = 1, \ldots, n$. Then $a_{n+1} = 2 q_0 - (a_1 + \cdots + a_n) = 2q_0 - n \le n - 2 \mat(G).$ In other words, $\a \notin q_0 ( H_{G^*,T} \backslash \partial H_{G^*,T}^-)$. It is a contradiction. The conclusion follows.
\end{proof}

\begin{cor} Let $G = C_{2n+1}$ be an odd cycle of length $2n + 1 \ge 3$. Then 
$$\reg (\R(G)) = \mat(G) + 1 = n+1.$$    
\end{cor}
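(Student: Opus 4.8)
The plan is to derive the corollary as an immediate consequence of Theorem \ref{thm_main}, so the work reduces to three routine verifications: computing $\mat(C_{2n+1})$, confirming that $\R(C_{2n+1})$ is normal, and showing that $C_{2n+1}$ is \emph{not} Tutte-Berge. For the matching number, a maximum matching of a cycle on $2n+1$ vertices covers an even number of vertices, hence at most $2n$ of them, and taking every other edge around the cycle attains this bound; thus $\mat(C_{2n+1}) = n$, with exactly one vertex left uncovered. For normality, I would invoke the Herzog--Hibi criterion \cite{HH}: $\R(G)$ is normal precisely when $G$ satisfies the odd cycle condition of \cite{OH} and has at most one non-bipartite connected component. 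Since $C_{2n+1}$ is connected and contains a single (odd) cycle, both conditions hold vacuously, so $\R(C_{2n+1})$ is normal.

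The crucial step is that $C_{2n+1}$ fails to be Tutte-Berge. Here I would simply observe that $C_{2n+1}$ is factor-critical: deleting any vertex leaves the path on $2n$ vertices, which has a perfect matching. Since $2n+1 > 1$, Lemma \ref{lem_critical} then directly gives that $C_{2n+1}$ is not Tutte-Berge. (Equivalently, factor-criticality means every vertex lies in $D(G)$, so $D(G) = V(C_{2n+1})$, which for $2n+1 \ge 3$ is not a collection of isolated vertices; Theorem \ref{thm_characterization} then gives the same conclusion.)

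To finish, note that $C_{2n+1}$ has at least two edges when $2n+1 \ge 3$ and $\R(G)$ is normal, but $G$ is not Tutte-Berge; the contrapositive of Theorem \ref{thm_main} therefore forces $\reg(\R(G)) \neq \mat(G)$. Combined with the Herzog--Hibi two-sided bound $\mat(G) \le \reg(\R(G)) \le \mat(G)+1$ from \cite{HH}, this pins down $\reg(\R(G)) = \mat(G) + 1 = n+1$. I do not anticipate any genuine obstacle: each ingredient is either elementary (the factor-criticality of an odd cycle) or already recorded in the paper, and the only point requiring care is combining the characterization with the regularity bound to conclude the exact value $\mat(G)+1$ rather than merely an inequality.
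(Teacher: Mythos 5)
Your proposal is correct and follows essentially the same route as the paper: the paper's proof is simply "$C_{2n+1}$ is not Tutte-Berge, so apply Theorem \ref{thm_main}," and your verifications (matching number, normality via the Herzog--Hibi criterion, and non-Tutte-Berge via factor-criticality and Lemma \ref{lem_critical}) are exactly the details the paper leaves implicit. The final step of upgrading $\reg(\R(G)) \neq \mat(G)$ to $\reg(\R(G)) = \mat(G)+1$ via the two-sided bound of \cite{HH} also matches how the paper handles the non-Tutte-Berge case inside the proof of Theorem \ref{thm_main}.
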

\begin{proof}
    Since $G$ is not Tutte-Berge, the conclusion follows from Theorem \ref{thm_main}.
\end{proof}
\begin{rem}
    Let $G = K_n$ be a complete graph on $n$ vertices. Assume that $n\ge 3$ is odd. Then $G$ is not Tutte-Berge. Hence, $\reg (\R(G)) = \mat(G)+1$. This is also a special case of \cite[Corollary 2.12]{BVV}.
\end{rem}
\begin{rem} The assumption that $\R(G)$ is normal is crucial in Theorem \ref{thm_main}. As pointed out by Herzog and Hibi \cite[Example 2.4]{HH}, there is a perfect matching graph $G$ such that $\reg (\R(G)) > \mat(G)$. The reason is that when $\R(G)$ is not normal, one cannot use Danilov-Stanley Theorem \cite[Theorem 6.3.5]{BH}; hence, Lemma \ref{lem_reg} is no longer valid.    
\end{rem}

\subsection*{Data availability}

Data sharing does not apply to this article as no datasets were generated or analyzed during the current study.

\subsection*{Conflict of interest}

The authors have no relevant financial interests to disclose.

\section*{Acknowledgments}
Cao Huy Linh is partially supported by the Vietnam National Program for the Development of Mathematics 2021-2030 under grant number B2023-CTT-03. Quang Hoa Tran is supported by the Vietnam National Foundation for Science and Technology Development (NAFOSTED) under grant number 101.04-2023.07.


\begin{thebibliography}{2}

\bibitem[B]{B}
C. Berge,
{\em Sur le couplage maximum d'un graphe},
Comp. Rend. Acad. Sci. {\bf 247} (1958), 258--259.

\bibitem[BH]{BH}
W. Bruns and J. Herzog,
\emph{Cohen-Macaulay rings. Rev. ed.}.
Cambridge Studies in Advanced Mathematics {\bf 39}, Cambridge University Press (1998).


\bibitem[BVV]{BVV}
W. Bruns, W.V. Vasconcelos, and R.H. Villarreal, 
\emph{Degree bounds in monomial subrings,}
Ill. J. Math. {\bf 41} (1997), 341--353.
 

\bibitem [CR]{CR} 
Y. Cid-Ruiz,
{\it Regularity and Gr\"obner bases of the Rees algebra of edge ideals of bipartite graphs,}
Le Matematiche {\bf 73} (2018), 279--296.



\bibitem[D]{D}
R.W. Deming,
{\em Independence numbers of graphs-an extension of the Koenig-Egervary theorem}, Discrete Mathematics {\bf 27} (1979), 23--33.


\bibitem[E]{E}
J. Edmonds,
\emph{Paths, trees, and flowers}, Canadian Journal of Mathematics {\bf 17} (1965), 449--467.



\bibitem[G1]{G1}
T. Gallai,
{\em Kritische graphen II,} Magyar Tud. Akad. Mat. Kutato Int. Kozl. {\bf 8} (1963), 373--395.


 
\bibitem [G2]{G2}
T. Gallai,
{\em Maximale Systeme unabh\"angiger Kanten}, Magyar Tud. Akad. Mat. Kutato Int. Kozl. {\bf 9} (1964), 401--413.


\bibitem[HH]{HH}
J. Herzog and T. Hibi, 
{\em Matching numbers and the regularity of the Rees algebra of an edge ideal,} Ann. Comb. {\bf 24}, (2020), 577--586.	


\bibitem[L]{L}
L. Lov\'asz,
{\em Ear decompositions of matching-covered graphs}, Combinatorica {\bf 3} (1983), 105--117.


\bibitem[LP]{LP}
L. Lov\'asz, M. Plummer, 
{\em Matching Theory,}
North-Holland, 1986.

 
\bibitem [NN]{NN}
R. Nandi, R. Nanduri, 
{\it On regularity bounds and linear resolutions of toric algebras of graphs,} 
J. Commut. Algebra, {\bf 14} Number 2 (2022), 285--296.



\bibitem[OH]{OH}
H. Ohsugi, T. Hibi,
{\em Normal polytopes arising from finite graphs},
 J. Algebra {\bf 207} (1998), 409--426.

\bibitem[S] {S} 
F. Sterboul, {\em A characterization of the graphs in which the transversal number equals the matching number}, Journal of Combinatorial Theory, Series B {\bf 27} (1979), 228--229.



\bibitem[T] {T} 
W. T. Tutte,
{\em The factorization of linear graph,} 
J. Lond. Math. Soc. {\bf 22} (1947), 107--111.

 

\end{thebibliography}
\end{document}